\newtheorem{theorem}{Theorem}
\newtheorem{lemma}[theorem]{Lemma}
\newtheorem{remark}[theorem]{Remark}
\newtheorem{corollary}[theorem]{Corollary}
\newtheorem{example}[theorem]{Example}
\newtheorem{proposition}[theorem]{Proposition}
\newtheorem{definition}[theorem]{Definition}
\newcommand{\R}{\mathbb{R}}
\renewcommand{\S}{\mathbb{S}}
\newcommand{\IP}[2]{\ensuremath{\left\langle #1, #2 \right\rangle}}
\begin{document}

\title[Solitons for the inverse mean curvature flow]{Solitons for the inverse mean curvature 
flow}

\author[G. Drugan]{Gregory Drugan}
\address{University of Oregon}
\email{drugan@uoregon.edu}
\author[H. Lee]{Hojoo Lee}
\address{Korea Institute for Advanced Study}
\email{autumn@kias.re.kr, momentmaplee@gmail.com}
\author[G. Wheeler]{Glen Wheeler}
\address{Institute for Mathematics and its Applications, University of Wollongong}
\email{glenw@uow.edu.au}

\begin{abstract}
We investigate self-similar solutions to the inverse mean curvature flow in
Euclidean space. In the case of one dimensional planar solitons, we explicitly classify all homothetic solitons and translators. 
 Generalizing Andrews' theorem that circles are the only compact homothetic planar solitons, we apply the Hsiung--Minkowski integral formula  to
prove the rigidity of the hypersphere in the class of compact expanders of codimension
one. We also establish that the moduli space of compact expanding surfaces of
codimension two is big. Finally, we update the list of Huisken--Ilmanen's
rotational expanders by constructing new examples of complete expanders with
rotational symmetry, including topological hypercylinders, called \textit{infinite bottles},
that interpolate between two concentric round hypercylinders.
\end{abstract}

\keywords{inverse mean curvature flow, self-similar solution}
\subjclass[2010]{53C44}
\maketitle

% \tableofcontents

\section{Main results}

In this paper, we study self-similar solutions to the inverse mean curvature flow in
Euclidean space. After a brief introduction, we present an explicit classification of the one dimensional homothetic solitons (Theorem~\ref{ALLcurves}). Examples include circles, involutes of circles, and logarithmic spirals. Then, we prove that families of cycloids are the only translating solitons (Theorem \ref{TMcycloid}), and we show how to construct translating surfaces via a tilted product of cyloids. 

Next, we consider the rigidity of homothetic solitons. In the class of closed homothetic solitons of codimension one, we prove that the round hyperspheres are rigid (Theorem~\ref{rigid1}).  For the higher codimension case, we observe that any minimal submanifold of the standard hypersphere is an expander, so in light of Lawson's construction \cite{Lawson1970} of minimal surfaces in $\S^3$, there are compact embedded expanders for any genus in ${\mathbb{R}}^{4}$. 

We conclude with an investigation of homothetic solitons with rotational symmetry. First, we construct new examples of complete expanders with rotational symmetry, called \emph{infinite bottles}, which are topological hypercylinders that interpolate between two concentric round hypercylinders (Theorem~\ref{chimney_thm}). Then, we show how the analysis in the proof of Theorem~\ref{chimney_thm} can be used to construct other examples of complete expanders with rotational symmetry, including the examples from Huisken-Ilmanen~\cite{HI1997}.

\section{Inverse mean curvature flow - history and applications} \label{intro}

Round hyperspheres in Euclidean space expand under the inverse mean curvature flow (IMCF) with an exponentially increasing radius. This behavior is typical for the flow. Gerhardt \cite{G1990} and Urbas \cite{U1990} showed that compact, star-shaped initial hypersurfaces with strictly positive mean curvature converge under IMCF, after suitable rescaling, to a round sphere.

Strictly positive mean curvature is an essential condition. For the IMCF to be parabolic, the mean curvature must be strictly positive.
Huisken and Ilmanen \cite{HI2008} proved that smoothness at later times is
characterised by the mean curvature remaining bounded strictly away from zero
(see also Smoczyk \cite{S2000}).
Within the class of strictly mean-convex surfaces, however, a solution to
inverse mean curvature flow will, in general, become singular in finite time. For
example, starting from a thin embedded torus with positive mean curvature in ${\mathbb{R}}^3$, the surface fattens up under IMCF and, after finite time, the mean curvature reaches zero at some points \cite[p. 364]{HI2001}. Thus, the classical description breaks down, and any appropriate weak definition of inverse mean curvature flow would need
to allow for a change of topology.

In 2001 Huisken and Ilmanen \cite{HI2001} used a level-set approach and
developed the notion of weak solutions for IMCF to overcome theses problems.
They showed existence for weak solutions and proved that Geroch''s monotonicity \cite{G1973}
for the Hawking mass carries over to the weak setting.
This enabled them to prove the Riemannian Penrose inequality, which also gave an
alternative proof for the Riemannian positive mass theorem. For a summary, we refer the reader to Huisken-Ilmanen \cite{HI1997,HI1997_2}. The work of Huisken and Ilmanen also shows that weak solutions become star-shaped and smooth outside
some compact region and thus (by the results of Gerhardt \cite{G1990} and Urbas \cite{U1990}) round in the limit.
Using a different geometric evolution equation, Bray \cite{B2001} proved the
most general form of the Riemannian Penrose inequality.
An overview of the different methods used by Huisken, Ilmanen, and Bray
can be found in \cite{B2002}.
An approach to solving the full Penrose inequality involving a generalised
inverse mean curvature flow was proposed in \cite{BHMS2007}.
To our knowledge, the full Penrose inequality is still an open problem.

Finally, let us mention some other applications and new developments in IMCF. Using IMCF, 
 Bray and Neves \cite{BN2004} proved the Poincar\'e
conjecture for 3-manifolds with Yamabe invariant greater than that of
$\R\mathbb{P}^3$ (see also \cite{AN2007}). 
Connections with $p$-harmonic functions and the weak formulation
of inverse mean curvature flow are described in \cite{M2007}, where a new proof for the existence of a proper weak solution is given, and in \cite{LWW2011}, where gradient bounds and non-existence results are proved. 
Recently, Kwong and Miao \cite{KM2014}
discovered a monotone quantity for the IMCF, which they used to derive new geometric inequalities for star-shaped hypersurfaces with positive mean curvature.

%%%%%%%%%% 
%  NEW SECTION
%%%%%%%%%% 

\section{Definitions and one dimensional examples} 
\label{SSsec2}

\begin{definition}[\textbf{Homothetic solitons of arbitrary codimension}] \label{SOL} 
A submanifold  ${\Sigma}^{n} \subset {\mathbb{R}}^{N}$ with nonvanishing mean curvature vector field $\overrightarrow{\, H \,}$ is called a \textit{homothetic soliton for the inverse mean curvature flow} if there exists a constant $C \in \mathbb{R}-\{0\}$ satisfying  
\begin{equation} \label{SOLequation}
  -  \frac{1}{{\vert \overrightarrow{\, H \,} \vert}^{2} \; } \overrightarrow{\, H \,}  = C  {X}^{\perp} \quad \; \text{on}  \; {\Sigma}, 
\end{equation}
where the vector field ${X}^{\perp}$ denotes the normal component of $X$. We notice that, for any constant $\lambda \neq 0$, the rescaled immersion $\lambda X$ is a soliton with the same value of $C$.  
\end{definition}

\begin{remark}  \label{support function}
On a homothetic soliton ${\Sigma}^{n} \subset {\mathbb{R}}^{N}$, we observe that the condition (\ref{SOLequation}) implies  
\[
 {\vert \overrightarrow{\, H \,} \vert}^{2}  = \IP { \overrightarrow{\, H \,}   }{\overrightarrow{\, H \,}  } 
=\IP { -  C {\vert \overrightarrow{\, H \,} \vert}^{2}     {X}^{\perp}  }{\overrightarrow{\, H \,}  }  =
- C   {\vert \overrightarrow{\, H \,} \vert}^{2}    \IP {  {X} } {  \overrightarrow{\, H \,}  }. 
\]
Since the mean curvature vector field $\overrightarrow{\, H \,}$ is nonvanishing, this shows

\[
 - \IP{\overrightarrow{\, H \,} }{X} =   \frac{1}{C} \quad \text{or} \quad  - \IP{ {\triangle}_{g} X   }{X} =   \frac{1}{C} 
  \quad \text{or} \quad {\triangle}_{g}  {\vert X \vert}^{2}    =  2 \left(   n - \frac{1}{C}  \right). 
\]
where $g$ denotes the induced metric on $\Sigma$.
\end{remark}

\begin{proposition}[\textbf{Homothetic solitons of codimension one}]  \label{SOL2}
Let  ${\Sigma}^{n} \subset {\mathbb{R}}^{n+1}$ be an oriented hypersurface with nowhere vanishing mean curvature vector field $\overrightarrow{\, H \,}= {\triangle}_{g} X$. Then, it becomes a \textit{homothetic soliton to the inverse mean curvature flow} if and only if there exists a constant $C \in \mathbb{R}-\{0\}$ satisfying  
\begin{equation} \label{SOLhyper2}
 - \IP{\overrightarrow{\, H \,} }{X} =   \frac{1}{C} \quad \text{or equivalently} \quad  - \IP{ {\triangle}_{g} X   }{X} =   \frac{1}{C}.
\end{equation} 
\end{proposition}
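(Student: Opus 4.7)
The plan is straightforward: one implication is already contained in the preceding Remark, and the converse is a one-line consequence of the fact that the normal bundle of a codimension one hypersurface is a rank one line bundle.

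For the ``only if'' direction, the Remark shows for submanifolds of arbitrary codimension that the soliton equation \eqref{SOLequation} forces $-\langle \vec H, X\rangle = 1/C$ after pairing with $\vec H$; applied to the codimension one setting this is exactly the first displayed identity in \eqref{SOLhyper2}. The equivalent form $-\langle \Delta_g X, X\rangle = 1/C$ follows from the standard identity $\vec H = \Delta_g X$ for an isometric immersion into Euclidean space, which is built into the hypothesis.

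For the converse I would fix a unit normal $\nu$ along $\Sigma$ and write $\vec H = H\nu$, where $H$ is the scalar mean curvature, nowhere zero by assumption. Since $X^\perp = \langle X,\nu\rangle\,\nu$, both sides of
\[ -\frac{1}{|\vec H|^2}\vec H \;=\; C\,X^\perp \]
are scalar multiples of $\nu$, so the vector equation is equivalent to the equality of their scalar coefficients along $\nu$. That scalar equality reads $-1/H = C\langle X,\nu\rangle$, which after multiplication by $H$ becomes $-\langle \vec H, X\rangle = 1/C$, precisely the hypothesis. Thus \eqref{SOLequation} is recovered from \eqref{SOLhyper2}.

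I do not expect any substantive obstacle. The entire content of the proposition is that a normal vector equation in a rank one bundle is determined by its scalar coefficient, once $|\vec H|$ is known to be nowhere zero. The only bookkeeping care needed is in tracking signs and scalings when passing between $\vec H$, $H\nu$, and $\langle X,\nu\rangle$; everything else is formal.
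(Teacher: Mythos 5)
Your argument is correct and matches the paper's proof essentially verbatim: the forward direction is delegated to the preceding Remark, and the converse is obtained by writing $\overrightarrow{\, H \,}=H\mathbf{N}$ and $X^\perp=\IP{X}{\mathbf{N}}\mathbf{N}$, so that the rank-one normal bundle reduces the vector equation to the scalar identity $-1/H=C\IP{X}{\mathbf{N}}$. No issues.
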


\begin{proof} 
According to the observation in Remark \ref{support function}, the vector equality in (\ref{SOLequation}) implies the scalar equality in (\ref{SOLhyper2}). To see that (\ref{SOLhyper2}) implies (\ref{SOLequation}), let $\mathbf{N}$ denote a unit normal vector, and let $H=- \left(div_{{}_{\Sigma}} \mathbf{N} \right)$ be the corresponding scalar mean curvature. Then
$\overrightarrow{\, H \,}= {\triangle}_{g} X =H\mathbf{N}$, and the condition (\ref{SOLhyper2}) becomes
\[
   - \IP{ H\mathbf{N} }{X} =   \frac{1}{C},
\]
which implies
\[   
C  {X}^{\perp} = \IP{\mathbf{N} }{CX} \mathbf{N} = -  \frac{1}{H} \mathbf{N}  =-  \frac{1}{{H}^2 }  \overrightarrow{\, H \,}.
\]
\end{proof}

\subsection{Expanders and shrinkers}    \label{Classification of homothetic soliton curves}

In 2003, Andrews \cite[Theorem 1.7]{Andrews2003} proved that circles centered
at the origin are the only \textit{compact} homothetic solitons for the inverse
mean curvature flow in ${\mathbb{R}}^{2}$. We give an explicit classification
of all homothetic soliton curves. In particular, the classical logarithmic
spirals and involutes of circles become expanders.

\begin{theorem}[\textbf{Curvature on homothetic soliton curves}] \label{geometric characterizations}
Let  $\mathcal{C}$ be a homothetic soliton curve with the velocity constant  $c \in \mathbb{R}-\{0\}$ for the inverse curve shortening flow.  Then, its curvature function $\kappa$ satisfies the Poisson equation 
\begin{equation}  \label{curvature arc length 1}
  {\triangle}_{\mathcal{C}}  \; \frac{1}{ \; {\kappa}^{2} \; } = 2 ( c - 1).
\end{equation}
This guarantees the existence of constants ${\alpha}_{1}$, ${\alpha}_{2} \in \mathbb{R}$ satisfying
\begin{equation}  \label{curvature arc length 2}
   {{\kappa}^{2}} =  \frac{1}{ (c-1) s^2 +  {\alpha}_{1} s + {\alpha}_{2} },
\end{equation}
where $s$ denotes an arc length parameter on  the soliton curve $\mathcal{C}$.
\end{theorem}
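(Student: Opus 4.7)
The approach has two stages: first derive the Poisson equation $\triangle_{\mathcal{C}}(1/\kappa^2) = 2(c-1)$ by combining the soliton condition with Frenet calculus, then integrate twice in arc length. The ingredient I expect to do most of the heavy lifting is the scalar identity $\triangle_{g} |X|^2 = 2(n - 1/C)$ already recorded in Remark~\ref{support function}, which in our one-dimensional setting with $C=c$ becomes $(|X|^2)'' = 2(c-1)/c$ (primes denote $d/ds$).

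For a planar curve one has $\vec{H} = \kappa \mathbf{N}$, so the soliton equation (\ref{SOLequation}), together with the scalar form of Remark~\ref{support function}, forces $\langle X, \mathbf{N}\rangle = -\frac{1}{c\kappa}$. I would then decompose $X = w \mathbf{T} + v \mathbf{N}$, where $w := \langle X, \mathbf{T}\rangle$ and $v := \langle X, \mathbf{N}\rangle = -\frac{1}{c\kappa}$. A one-line differentiation via $X' = \mathbf{T}$ and the Frenet equations $\mathbf{T}' = \kappa \mathbf{N}$, $\mathbf{N}' = -\kappa \mathbf{T}$ gives
\[
w' \;=\; 1 + \kappa v \;=\; 1 - \frac{1}{c} \;=\; \frac{c-1}{c},
\]
so $w$ is affine in $s$ and $w'' = 0$.

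The orthogonal splitting $|X|^2 = w^2 + v^2 = w^2 + \frac{1}{c^2 \kappa^2}$ then converts the Remark into the claimed Poisson equation: differentiating twice and using $(w^2)'' = 2(w')^2 = 2(c-1)^2/c^2$ together with $(|X|^2)'' = 2(c-1)/c$ yields
\[
\frac{1}{c^2}\left(\frac{1}{\kappa^2}\right)'' \;=\; \frac{2(c-1)}{c} - \frac{2(c-1)^2}{c^2} \;=\; \frac{2(c-1)}{c^2},
\]
which is exactly (\ref{curvature arc length 1}). Integrating this ODE twice in $s$ produces two constants of integration $\alpha_1$, $\alpha_2$ and gives the quadratic formula (\ref{curvature arc length 2}).

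There is no serious obstacle here. The only genuine idea is the orthogonal decomposition $X = w\mathbf{T}+v\mathbf{N}$: it isolates $1/\kappa^2$ inside $v^2$ and simultaneously produces a tangential piece $w$ whose second derivative vanishes, so that the Laplacian of $|X|^2$ delivered by the Remark passes directly onto $1/\kappa^2$. The remainder is bookkeeping with the Frenet formulas.
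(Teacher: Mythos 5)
Your proof is correct. You and the paper both begin from the same two ingredients --- the orthogonal decomposition $X=\tau T+\nu N$ (your $w$ and $v$) and the soliton relation $\kappa\nu=-\tfrac{1}{c}$ --- but you extract the Poisson equation by a genuinely different route. The paper passes to the tangential angle $\theta$, rewrites the structure equations as $\tfrac{d\tau}{d\theta}=(1-c)\nu$, $\tfrac{d\nu}{d\theta}=-\tau$, derives the linear ODE $\tfrac{d^{2}\nu}{d\theta^{2}}+(1-c)\nu=0$, and then converts back to arc length via the chain rule $\tfrac{d}{ds}=\kappa\tfrac{d}{d\theta}$ to get $\tfrac{d^{2}}{ds^{2}}\bigl(\tfrac{1}{\kappa^{2}}\bigr)=\tfrac{2}{\nu}\tfrac{d^{2}\nu}{d\theta^{2}}=2(c-1)$. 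You instead stay entirely in the arc-length parameter: you feed the identity ${\triangle}_{g}|X|^{2}=2\bigl(n-\tfrac{1}{C}\bigr)$ from Remark~\ref{support function} into the splitting $|X|^{2}=w^{2}+v^{2}$ with $v^{2}=\tfrac{1}{c^{2}\kappa^{2}}$, and use that $w'=\tfrac{c-1}{c}$ is constant so that $(w^{2})''=2(w')^{2}$; the arithmetic then isolates $\bigl(\tfrac{1}{\kappa^{2}}\bigr)''=2(c-1)$, and two integrations give (\ref{curvature arc length 2}). For this particular statement your argument is the more economical one, since it never needs $\theta$ as a regular parameter. The paper's detour through the $\theta$-ODE is not wasted, however: the explicit solutions of $\tfrac{d^{2}\nu}{d\theta^{2}}+(1-c)\nu=0$ in the three regimes of $c$ are precisely what drives the explicit parametrizations in Theorem~\ref{ALLcurves}, so the paper is setting up the next proof at the same time. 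The only hypothesis you should flag explicitly is that $\kappa$ is nowhere zero (so that $1/\kappa^{2}$ is a globally defined smooth function of $s$ and the twice-integrated identity holds on all of $\mathcal{C}$); this follows from the nonvanishing of the mean curvature vector in Definition~\ref{SOL} together with $\kappa\nu=-\tfrac{1}{c}$, exactly as the paper observes.
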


\begin{proof} We begin with a unit speed patch $X(s)=\left(x(s), y(s)\right)$ of the curve $\mathcal{C}$. The unit tangent vector $T(s)$, the unit normal vector $N(s)$, and the tangential angle map $\theta(s)$ are defined by
\[
  T(s)= \left(\, \dot{x}(s), \dot{y}(s) \, \right) = \left( \, \cos \theta(s), \sin \theta(s) \, \right), 
\quad   N(s)= \left(\, - \dot{y}(s), \dot{x}(s) \, \right).
\]
The curvature vector $\overrightarrow{\kappa}$ and scalar curvature $\kappa$ are given by
\[
\overrightarrow{\kappa}(s) =  \left(\, \ddot{x}(s), \ddot{y}(s) \, \right) = \kappa(s)  N(s), \quad  \kappa(s)  = \dot{\theta} (s).
\] 
Introducing $\tau=X \cdot T$ and $\nu =X \cdot N$, we have the well-known structure equations
\[
  \frac{d\tau}{ds} = 1+ \kappa \nu, \quad \frac{d\nu}{ds} = - \kappa \tau.
\]
Since $\mathcal{C}$ is a homothetic soliton curve with speed $c \in \mathbb{R}-\{0\}$, we have 
$ -  \overrightarrow{\kappa} \cdot X  = \frac{1}{c} \; \Longleftrightarrow \; \kappa \nu = - \frac{1}{c}$.
In particular, $\nu$ and $\frac{d\theta}{ds}=\kappa$ are nonvanishing. We can rewrite the structure equations as 
\[
\frac{d\tau}{d\theta} = (1-c) \nu, \quad \frac{d\nu}{d\theta} = -  \tau,
\]
which implies the ODE
\begin{equation}  \label{support 1D}
  \frac{d^{2}\nu}{{d\theta}^{2}}  + (1-c) \nu = 0.
\end{equation}
Noticing that 
$\frac{d^{2} }{{ds}^{2}}    =  \kappa \frac{d}{d \theta} \,  \left( \;    \kappa \frac{d}{d \theta}  \; \right)
= \frac{1}{c^2v} \frac{d}{d \theta} \,  \left( \;    \frac{1}{v}   \frac{d}{d \theta}  \; \right)$ and using (\ref{support 1D}), 
we conclude 
\[
  \frac{d^{2}}{{ds}^{2}} \left( \frac{1}{{\kappa}^{2}} \right) 
  = \frac{1}{c^2v} \frac{d}{d \theta} \,  \left( \;    \frac{1}{v}   \frac{d}{d \theta} \, \left( c^2 v^2 \right) \; \right)
= \frac{2}{v}   \frac{d^{2}\nu}{{d\theta}^{2}} = 2(c-1). 
\]
\end{proof}

\begin{theorem}[\textbf{Explicit parametrization of homothetic soliton curves}] \label{ALLcurves}
Let  $\mathcal{C}$ be a homothetic soliton curve with constant  $c \in \mathbb{R}-\{0\}$ for the inverse curve shortening flow. Then there exist  constants ${\mu}_{1}, {\mu}_{2} \in \mathbb{R}$ such that  the curve $\mathcal{C}$ admits an explicit patch $X_{\left({\mu}_{1}, {\mu}_{2}\right)}=\left( \, x(\theta), y(\theta) \, \right):$
\begin{enumerate}
\item $c<0$ or $0<c<1:$ Set $\alpha=\sqrt{1-c}$ 
\[
\begin{cases}
 {x}_{\left({\mu}_{1}, {\mu}_{2}\right)}(\theta) =    \alpha \left[ {\mu}_{1}  \sin \left( \alpha   \theta  \right)  - {\mu}_{2}  \cos \left( \alpha   \theta  \right)    \right] \cos \theta  - \left[ {\mu}_{1}  \cos \left( \alpha   \theta  \right)   + {\mu}_{2}  \sin \left( \alpha   \theta  \right)    \right] \sin \theta     , \\ 
 {y}_{\left({\mu}_{1}, {\mu}_{2}\right)}(\theta) =   \alpha \left[ {\mu}_{1}  \sin \left( \alpha   \theta  \right)   - {\mu}_{2}  \cos \left( \alpha   \theta  \right)    \right] \sin \theta + \left[ {\mu}_{1}  \cos \left( \alpha   \theta  \right)   + {\mu}_{2}  \sin \left( \alpha   \theta  \right)    \right] \cos \theta   .
\end{cases}
\]
\item $c=1:$  
\[
\begin{cases}
 {x}_{\left({\mu}_{1}, {\mu}_{2}\right)}(\theta) =   -{\mu}_{2} \cos \theta - ( {\mu}_{1} + {\mu}_{2} \theta )  \sin \theta,   , \\ 
 {y}_{\left({\mu}_{1}, {\mu}_{2}\right)}(\theta) =  -{\mu}_{2} \sin \theta + ( {\mu}_{1} + {\mu}_{2} \theta )  \cos \theta.
\end{cases}
\]
\item $c>1:$ Set $\alpha=\sqrt{c-1}$.  
\[ 
\begin{cases}
 {x}_{\left({\mu}_{1}, {\mu}_{2}\right)}(\theta) = -  \alpha \left[ {\mu}_{1}  \sinh \left( \alpha   \theta  \right)   + {\mu}_{2}  \cosh \left( \alpha   \theta  \right)    \right] \cos \theta  - \left[ {\mu}_{1}  \cosh \left( \alpha   \theta  \right)   + {\mu}_{2}  \sinh \left( \alpha   \theta  \right)    \right] \sin \theta     , \\ 
 {y}_{\left({\mu}_{1}, {\mu}_{2}\right)}(\theta) =  - \alpha \left[ {\mu}_{1}  \sinh \left( \alpha   \theta  \right)   + {\mu}_{2}  \cosh \left( \alpha   \theta  \right)    \right] \sin \theta \, + \left[ {\mu}_{1}  \cosh \left( \alpha   \theta  \right)   + {\mu}_{2}  \sinh \left( \alpha   \theta  \right)    \right] \cos \theta   .
\end{cases}
\]
\end{enumerate}
\end{theorem}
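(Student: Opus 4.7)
The proof plan is to solve the linear ODE for the normal support function $\nu(\theta)$ derived in the proof of Theorem~\ref{geometric characterizations}, recover the tangential function $\tau(\theta)$ algebraically, and then reconstruct the position vector via the orthogonal decomposition in the Frenet frame $\{T,N\}$.

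First I would record that along a homothetic soliton curve the position vector decomposes as $X=\tau T+\nu N$, where $T(\theta)=(\cos\theta,\sin\theta)$ and $N(\theta)=(-\sin\theta,\cos\theta)$. This immediately yields
\[
x(\theta)=\tau(\theta)\cos\theta-\nu(\theta)\sin\theta,\qquad y(\theta)=\tau(\theta)\sin\theta+\nu(\theta)\cos\theta,
\]
so the task reduces to finding closed-form expressions for $\tau$ and $\nu$ as functions of $\theta$. Because $\kappa\nu=-\frac{1}{c}\ne 0$, the tangential angle $\theta$ is a legitimate local parameter, and the structure equation $\frac{d\nu}{d\theta}=-\tau$ already established reduces the problem to finding $\nu(\theta)$ alone.

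Next I would solve the second-order linear ODE $\nu''+(1-c)\nu=0$ in the three cases distinguished by the sign of $1-c$: when $c<1$ and $c\ne 0$, set $\alpha=\sqrt{1-c}$ and take $\nu(\theta)=\mu_{1}\cos(\alpha\theta)+\mu_{2}\sin(\alpha\theta)$; when $c=1$, take $\nu(\theta)=\mu_{1}+\mu_{2}\theta$; when $c>1$, set $\alpha=\sqrt{c-1}$ and take $\nu(\theta)=\mu_{1}\cosh(\alpha\theta)+\mu_{2}\sinh(\alpha\theta)$. Differentiating these gives $\tau=-\nu'$ in closed form, and substituting the pair $(\tau,\nu)$ into the decomposition formulas above reproduces exactly the parametrizations listed in (1)--(3).

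The only real obstacle is the bookkeeping of signs and trigonometric identities (especially the minus sign in front of $\alpha$ in the hyperbolic case and the mixed cross-terms $\sin\theta\cos(\alpha\theta)$), together with the a~posteriori verification that $\theta$ truly equals the tangential angle of the reconstructed curve. This verification is a short computation: using $\frac{dT}{d\theta}=N$ and $\frac{dN}{d\theta}=-T$, one obtains
\[
\frac{dX}{d\theta}=\left(\tau'-\nu\right)T+\left(\tau+\nu'\right)N,
\]
whose $N$-component vanishes by $\tau=-\nu'$, while the $T$-component equals $(1-c)\nu-\nu=-c\nu\ne 0$ by the ODE and the soliton condition. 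Thus $\theta$ is the tangential angle, and since the two integration constants $(\mu_1,\mu_2)$ exhaust a two-parameter family of solutions, every homothetic soliton curve with velocity $c$ arises in this way.
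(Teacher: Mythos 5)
Your proposal is correct and follows essentially the same route as the paper: decompose $X=\tau T+\nu N$, use the structure relation $\tau=-\frac{d\nu}{d\theta}$ to reduce everything to the linear ODE $\nu''+(1-c)\nu=0$, and integrate it in the three cases according to the sign of $1-c$. The only addition is your a posteriori check that $\theta$ is the tangential angle, which the paper leaves implicit but which is a correct and harmless supplement.
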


\begin{proof} It is a continuation of the proof of Theorem \ref{geometric characterizations}. We first observe that 
\[
 X = \left(x, y \right) = \left(  \tau \cos \theta - \nu \sin \theta, \tau \sin \theta + \nu \cos \theta  \right)
=  \left(  - \frac{d\nu}{d\theta} \cos \theta - \nu \sin \theta, - \frac{d\nu}{d\theta} \sin \theta + \nu \cos \theta  \right).
\]
In addition, the ODE
\[
  \frac{d^{2}\nu}{{d\theta}^{2}}  + (1-c) \nu = 0
\]
can be explicitly integrable depending on the sign of $1-c$. We have the three cases:
\begin{enumerate}
\item $c<0$ or $0<c<1:$ Setting $\alpha=\sqrt{1-c}$, we have  
\[
 \nu =   {\mu}_{1}  \cos \left( \alpha   \theta  \right)   + {\mu}_{2}  \sin \left( \alpha   \theta  \right), \quad 
 \frac{d\nu}{d\theta} =  - \alpha \left[ {\mu}_{1}  \sin \left( \alpha   \theta  \right)  - {\mu}_{2}  \cos \left( \alpha   \theta  \right)    \right].
\]
\item $c=1:$  We have 
$\nu =   {\mu}_{1}     + {\mu}_{2}    \theta$ and  
 $\frac{d\nu}{d\theta} = {\mu}_{2}$.

\item $c>1:$ Setting $\alpha=\sqrt{c-1}$,  we have
\[
 \nu =   {\mu}_{1}  \cosh \left( \alpha   \theta  \right)   + {\mu}_{2}  \sinh \left( \alpha   \theta  \right), \quad 
 \frac{d\nu}{d\theta} = \alpha \left[ {\mu}_{1}  \sinh \left( \alpha   \theta  \right)   + {\mu}_{2}  \cosh \left( \alpha   \theta  \right)    \right].
\]
\end{enumerate}
\end{proof}

\begin{remark} We point out some classical cases among soliton curves.
\begin{enumerate}
\item The case $c=1$: When ${\mu}_{2}=0$, it is a circle of radius $\vert {\mu}_{1} \vert$. When ${\mu}_{2} \neq 0$. it becomes the involute of the circle of radius $\vert {\mu}_{2} \vert$.
\item The case $c>1$: Set $\alpha=\sqrt{c-1}=\tan \beta$ and take ${\mu}_{1} ={\mu}_{2}=1$, we have the soliton
\[
      \left( x(\theta), y(\theta) \right) = \frac{e^{ \left( \tan \beta \right) \theta} }{\cos \beta} \left(  \, \sin (\theta+\beta), \,
      - \cos (\theta+\beta) \, \right).
\]
Up to homotheties, reflections, and rotations, it is  the logarithmic spiral
$r = e^{ \left( \tan \beta \right) \theta}$. It is worth to mention the geometrical observation that 
logarithmic spirals could be regarded as generalized involutes of a single point. 
See \cite[Example 2]{AM2010}. 

\end{enumerate}
\end{remark}

\subsection{Cycloids as translators}

\begin{definition}[\textbf{Translators of arbitrary codimension}] \label{translatingSOL} 
A submanifold  ${\Sigma}^{n} \subset {\mathbb{R}}^{N}$ with nonvanishing mean curvature vector field $\overrightarrow{\, H \,}$ is called a \textit{translator for the inverse mean curvature flow} if there exists a non-zero constant vector field $\mathbf{V}$ satisfying  
\begin{equation} \label{translator equation}
  -  \frac{1}{{\vert \overrightarrow{\, H \,} \vert}^{2} \; } \overrightarrow{\, H \,}  =    {\mathbf{V}}^{\perp} \quad \; \text{on}  \; {\Sigma}, 
\end{equation}
where the vector field ${\mathbf{V}}^{\perp}$ denotes the normal component of $\mathbf{V}$. We say that 
 $\mathbf{V}$ is the velocity of the translator ${\Sigma}$.
\end{definition}

\begin{proposition}[\textbf{Translators of codimension one}]  \label{translatingSOL2}
Let  ${\Sigma}^{n} \subset {\mathbb{R}}^{n+1}$ be an oriented hypersurface with nonvanishing mean curvature vector field $\overrightarrow{\, H \,}= {\triangle}_{g} X$, where $g$ denotes the induced metric on $\Sigma$. Then, it becomes a \textit{translator to the inverse mean curvature flow} if and only if there exists a non-zero constant vector field $\mathbf{V}$ satisfying  
\begin{equation} \label{translatingSOLhyper2}
 \IP {     {\mathbf{V}}       }{\overrightarrow{\, H \,}  } = -1. 
 \end{equation} 
\end{proposition}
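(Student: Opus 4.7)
The plan is to mirror the proof of Proposition~\ref{SOL2} almost verbatim, since the translator equation differs from the homothetic soliton equation only in that the position vector $X$ is replaced by a constant vector field $\mathbf{V}$ and the scalar $CX^\perp$ is replaced by $\mathbf{V}^\perp$. The argument splits naturally into the two implications.

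For the forward direction, I would start from the vector equation $-\frac{1}{|\vec{H}|^2}\vec{H} = \mathbf{V}^\perp$ and take the inner product with $\vec{H}$. Since $\vec{H}$ is normal to $\Sigma$, one has $\langle \mathbf{V}^\perp, \vec{H}\rangle = \langle \mathbf{V}, \vec{H}\rangle$, so the left-hand side becomes $-\frac{|\vec{H}|^2}{|\vec{H}|^2} = -1$, yielding $\langle \mathbf{V}, \vec{H}\rangle = -1$. This step works in arbitrary codimension and parallels the computation in Remark~\ref{support function}.

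For the reverse direction, I would use the codimension-one hypothesis to write $\vec{H} = H\mathbf{N}$, where $\mathbf{N}$ is a unit normal and $H = -\operatorname{div}_\Sigma \mathbf{N}$ is the scalar mean curvature. The scalar equation $\langle \mathbf{V}, \vec{H}\rangle = -1$ becomes $H\langle \mathbf{V}, \mathbf{N}\rangle = -1$, so $\langle \mathbf{V}, \mathbf{N}\rangle = -1/H$. Since the normal bundle is one-dimensional, $\mathbf{V}^\perp = \langle \mathbf{V}, \mathbf{N}\rangle \mathbf{N}$, and thus
\[
\mathbf{V}^\perp = -\frac{1}{H}\mathbf{N} = -\frac{1}{H^2}(H\mathbf{N}) = -\frac{1}{|\vec{H}|^2}\vec{H},
\]
which is exactly \eqref{translator equation}.

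There is essentially no obstacle here: the only subtlety is that the reverse implication genuinely uses the codimension-one hypothesis, because it relies on $\mathbf{V}^\perp$ being proportional to $\vec{H}$ (which would fail in higher codimension, where $\mathbf{V}^\perp$ could have a component in the normal bundle orthogonal to $\vec{H}$). The nonvanishing of $\vec{H}$ (and hence of $H$) is needed to divide by $H$ in the last step.
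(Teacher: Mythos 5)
Your proposal is correct and follows essentially the same route as the paper's own proof: pairing the vector equation with $\overrightarrow{\, H \,}$ for the forward direction, and using $\overrightarrow{\, H \,}=H\mathbf{N}$ together with $\mathbf{V}^{\perp}=\IP{\mathbf{V}}{\mathbf{N}}\mathbf{N}$ for the converse. Your added remark about where the codimension-one hypothesis and the nonvanishing of $H$ enter is accurate and consistent with the paper's argument.
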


\begin{proof} We first observe that the 
condition (\ref{translator equation}) implies the equality
\[
 -1  = \IP {     -  \frac{1}{{\vert \overrightarrow{\, H \,} \vert}^{2} \;  }   \overrightarrow{\, H \,}       }{\overrightarrow{\, H \,}  } 
=\IP {     {\mathbf{V}}^{\perp}      }{\overrightarrow{\, H \,}  } 
=\IP {     {\mathbf{V}}       }{\overrightarrow{\, H \,}  }. 
\] 
It remains to check that the scalar equality
 (\ref{translatingSOLhyper2}) implies the vectorial equality in (\ref{translator equation}). Let $\mathbf{N}$ denote a unit normal vector and $H=- \left(div_{{}_{\Sigma}} \mathbf{N} \right)$ its  scalar mean curvature so that
$\overrightarrow{\, H \,}= {\triangle}_{g} X =H\mathbf{N}$.
Then the condition (\ref{translatingSOLhyper2}) becomes $-1 =  \IP {     \mathbf{V}     }{\overrightarrow{\, H \,}  } =  H  \IP {     {\mathbf{V}}       }{ \mathbf{N}  }$, 
which implies
\[   
 {\mathbf{V}}^{\perp} = \IP  {   \mathbf{V}  }{  \mathbf{N} } \mathbf{N} = -  \frac{1}{H} \mathbf{N}  =-  \frac{1}{{H}^2 }  \overrightarrow{\, H \,}.
\]
\end{proof}

\begin{corollary}[\textbf{Height function on translating hypersurfaces}]  \label{translatingSOL2b}
A submanifold ${\Sigma}^{n} \subset {\mathbb{R}}^{n+1}$ with nonvanishing mean curvature is a \textbf{translator to the inverse mean curvature flow} with velocity $\mathbf{V}
=(0, \cdots, 0, 1)$ if and only if 
\begin{equation} \label{translatingSOLhyper3}
-1 = {\triangle}_{{}_{\Sigma}} x_{n+1}  \quad \; \text{on}  \; {\Sigma}.
\end{equation}  
\end{corollary}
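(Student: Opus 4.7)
The plan is to reduce the claim directly to Proposition~\ref{translatingSOL2} by recognizing $x_{n+1}$ as the component of the position vector $X$ along the velocity direction and using that the induced Laplacian commutes with constant linear functionals.

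First I would invoke Proposition~\ref{translatingSOL2}, which (since $\mathbf{V}$ is non-zero and constant) tells us that $\Sigma$ is a translator with velocity $\mathbf{V}$ if and only if
\begin{equation*}
\IP{\mathbf{V}}{\overrightarrow{\,H\,}} = -1 \quad \text{on } \Sigma.
\end{equation*}
Since $\overrightarrow{\,H\,} = \triangle_g X$, and since $\mathbf{V}$ is a constant vector in $\mathbb{R}^{n+1}$, pairing commutes with the (componentwise) Laplacian:
\begin{equation*}
\IP{\mathbf{V}}{\triangle_g X} = \triangle_g \IP{\mathbf{V}}{X}.
\end{equation*}

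Next, I specialize to $\mathbf{V} = (0,\dots,0,1)$, so that $\IP{\mathbf{V}}{X} = x_{n+1}$, the restriction of the last Euclidean coordinate to $\Sigma$. Combining this with the previous display, the translator condition becomes exactly $\triangle_{\Sigma} x_{n+1} = -1$, which is \eqref{translatingSOLhyper3}. Both directions follow simultaneously because Proposition~\ref{translatingSOL2} is an equivalence. There is no real obstacle here; the only thing to be careful about is to note that the identity $\IP{\mathbf{V}}{\triangle_g X} = \triangle_g \IP{\mathbf{V}}{X}$ is justified by the constancy of $\mathbf{V}$ together with the linearity of $\triangle_g$ acting on smooth functions on $\Sigma$, applied to each Euclidean coordinate of $X$.
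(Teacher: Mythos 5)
Your proof is correct and matches the paper's intended argument: the corollary is stated without proof precisely because it follows immediately from Proposition~\ref{translatingSOL2} by specializing $\mathbf{V}=(0,\dots,0,1)$ and using $\IP{\mathbf{V}}{\triangle_g X}=\triangle_g\IP{\mathbf{V}}{X}=\triangle_\Sigma x_{n+1}$, exactly as you do.
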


Now, we prove the uniqueness of cycloids as the one dimensional translator in  ${\mathbb{R}}^{2}$.

\begin{theorem}[\textbf{Classification of translating curves  in ${\mathbb{R}}^{2}$}]
\label{TMcycloid}
Any translating curves with unit speed for the inverse mean curvature flow in the Euclidean plane are congruent to cycloids generated by a circle of radius $\frac{1}{4}$. 
\end{theorem}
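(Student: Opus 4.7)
The plan is to reduce the translator condition for a planar curve to a first-order ODE in the tangent angle, integrate it explicitly, and then recognize the resulting parametrization as a cycloid of radius $\frac14$. By rotating the ambient plane we may assume the velocity vector is $\mathbf{V}=(0,1)$. Writing the curve in a unit-speed patch $X(s)=(x(s),y(s))$ with tangent angle $\theta(s)$, so that $T=(\cos\theta,\sin\theta)$, $N=(-\sin\theta,\cos\theta)$, and $\vec{\kappa}=\kappa N$ with $\kappa=\dot\theta$, the translator equation from Proposition~\ref{translatingSOL2} becomes the scalar identity
\[
 \IP{\mathbf{V}}{\vec{\kappa}} \;=\; \kappa\cos\theta \;=\; -1.
\]
This is the central reduction: the translator PDE collapses to the clean relation $\kappa=-\sec\theta$, valid on the open interval where $\cos\theta\neq 0$.

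Next I would exploit $\kappa=d\theta/ds$ to convert this into the separable ODE $\cos\theta\,d\theta=-ds$, integrating to $s=-\sin\theta+c_0$ (so $\theta$ may be used as a parameter on a single arch). Then, since $dx/ds=\cos\theta$ and $dy/ds=\sin\theta$ with $ds=-\cos\theta\,d\theta$, one directly integrates
\[
 x(\theta)=-\!\int\cos^{2}\theta\,d\theta=-\tfrac{\theta}{2}-\tfrac{1}{4}\sin 2\theta+c_{1},\qquad
 y(\theta)=-\!\int\sin\theta\cos\theta\,d\theta=\tfrac{1}{4}\cos 2\theta+c_{2}.
\]
After absorbing the constants of integration into a translation of the ambient plane (which preserves the class of translators with the same $\mathbf{V}$), the trajectory is completely determined.

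Finally, I would introduce the new parameter $\phi=\pi-2\theta$ (or equivalently $\phi=2\theta+\pi$ followed by a horizontal reflection, which is a congruence of the plane). A short trigonometric computation converts the pair above into
\[
 x(\phi)=\tfrac14(\phi-\sin\phi),\qquad y(\phi)=\tfrac14(1-\cos\phi),
\]
which is the standard parametrization of a cycloid generated by rolling a circle of radius $\tfrac14$ along the $x$-axis. Converting back through the rotation and translation used at the outset yields the congruence claimed in the statement.

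The genuinely delicate points are minor: one must verify that $\cos\theta$ does not vanish identically (guaranteed by $\kappa\cos\theta=-1$ with $\kappa$ finite and nonzero), and one must justify passing between the open arches $\theta\in(-\tfrac{\pi}{2},\tfrac{\pi}{2})$ (mod $\pi$) and the cusps of the cycloid where $\kappa$ blows up, matching the nonvanishing-mean-curvature hypothesis with the open arcs of the classical cycloid. The bookkeeping of the final reparametrization to reach the canonical cycloid formula is the only place calculation is needed; everything else is forced by the ODE.
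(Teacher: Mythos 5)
Your proposal is correct and follows essentially the same route as the paper: reduce the translator condition to $\kappa\cos\theta=-1$, integrate $\left(\frac{dx}{d\theta},\frac{dy}{d\theta}\right)=\left(-\cos^{2}\theta,-\cos\theta\sin\theta\right)$, and identify the result with the standard cycloid of radius $\tfrac14$ after a reparametrization, reflection, and translation. Your added remarks on the nonvanishing of $\cos\theta$ and on the cusps (where $\kappa$ blows up, consistent with the nonvanishing-curvature hypothesis restricting attention to open arches) are a small refinement the paper leaves implicit.
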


\begin{proof}
Let the connected curve $\mathcal{C}$ be a translator in the $xy$-plane with unit velocity $\mathbf{V}= (0, 1)$. 
Adopt the paparmetrization $X(s) = ( x(s), y(s) )$, where  
$s$ denotes the arclength on $\mathcal{C}$ and introduce the tangential angle function $\theta(s)$ such that
the tangent $\frac{dX}{ds}=\left( \cos \theta, \sin \theta \right) $ and
the normal $N(s)=\left( -\sin \theta, \cos \theta \right)$. The translator condition reads
\[
 - \frac{1}{\kappa} = \cos \theta
\]
Now, we integrate 
\[
 \left( \frac{d x}{d \theta},  \frac{d y}{d \theta} \right) =  \left( \frac{d s}{d \theta} \frac{dx}{ds}, \frac{d s}{d \theta}  \frac{d y}{d s} \right) =  \left( \frac{1}{\kappa} \cos \theta, \frac{1}{\kappa} \sin \theta \right)=  \left( -   {\cos}^{2} \theta, - \cos \theta  \sin \theta \right)
\]
to recover the curve, up to translations, 
\[
\left(  x, y  \right) =  \frac{1}{4} \left( - 2 \theta - \sin\left( 2 \theta \right)  ,  1+  \cos \left( 2 \theta \right)  \right).
\]
After introducing the new variable $t=-\pi+2\theta$, we have $\left(  x, y  \right) =  \frac{1}{4} \left( -\pi - t + \sin t,  1 -  \cos t \right)$.
Reflecting about the x-axis and the translating along the $(1,0)$ direction, the translator is congruent to the cycloid 
represented by $\frac{1}{4} \left(t - \sin t,  1 -  \cos t \right)$. Therefore, we conclude that  $\mathcal{C}$ is congruent to the cycloid through
 the origin, generated by a circle of radius $\frac{1}{4}$.
\end{proof}

\begin{example}[\textbf{Tilted cycloid products - one parameter family of translators with the same speed in ${\mathbb{R}}^{3}$}] We can use cycloids (one dimensional translator in ${\mathbb{R}}^{2}$) to construct a one parameter family of two dimensional translators with velocity $(0,0,1)$ in  ${\mathbb{R}}^{3}$. Let $(\alpha(s), \beta(s))$ denote a unit speed patch of the translating curve $\mathcal{C}$ with velocity $(0,1)$ in the $\alpha\beta$-plane, so that $\beta''(s)=-1$ on the translator $\mathcal{C}$. For each constant $\mu \in \left( -\frac{\pi}{2}, \frac{\pi}{2} \right)$, we introduce orthonormal vectors 
 \[
  {\textbf{v}}_{1}=(\cos \mu, \; 0, \; -\sin \mu ) , \quad {\textbf{v}}_{2}=(0, \; 1, \; 0 ) , \quad {\textbf{v}}_{3}=(\sin \mu , \; 0, \; \cos \mu ), 
  \]
   and associate the product surface 
 ${\Sigma}_{\mu} = \mathbb{R} \times \frac{1}{\cos  \mu} \mathcal{C}$ defined by the patch
 \[
 \mathbf{X}(s,h)= h {\textbf{v}}_{1} + \frac{ \alpha(s)}{\cos \mu} {\textbf{v}}_{2} + \frac{ \beta(s)}{\cos \mu}  {\textbf{v}}_{3}.
 \]
A straightforward computation yields 
 \[
  \IP{  \triangle_{{\Sigma}_{\mu}} X }{(0,0,1)} =\IP{ \frac{1}{\cos \mu}  \left(  \alpha''(s) {\textbf{v}}_{2}  + \beta''(s) {\textbf{v}}_{3}  \right)  }{(0,0,1)} =
  \beta''(s)=-1,
 \]
 which guarantees that the surface ${\Sigma}_{\mu}$ becomes a translator with velocity $(0,0,1)$ in ${\mathbb{R}}^{3}$.
 \end{example}

%%%%%%%%%% 
%   NEW SECTION
%%%%%%%%%%

\section{Rigidity of hyperspheres and spherical expanders}
\label{SSsec3}
 
We first prove that hyperspheres, as homothetic solitons to the inverse mean curvature flow, are exceptionally rigid.
It is a higher dimensional generalization of Andrews' result  \cite[Theorem 1.7]{Andrews2003} that circles centered at the origin 
are the only compact homothetic solitons in  ${\mathbb{R}}^{2}$. 
We then explain that the moduli space of spherical expanders of higher codimension is large. 
 
\begin{theorem}[\textbf{Uniqueness of spheres as compact solitons}]  \label{rigid1}
Let ${\Sigma}^{n \geq 2}$ be a homothetic soliton hypersurface for the inverse mean curvature flow in  ${\mathbb{R}}^{n+1 \geq 3}$. If ${\Sigma}$ is closed, then it is a round hypersphere (centered at the origin). 
\end{theorem}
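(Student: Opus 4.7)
The plan is to leverage the scalar identity ${\triangle}_{g}{\vert X\vert}^{2} = 2(n - 1/C)$ already derived in Remark~\ref{support function}, which reduces the problem to a Hopf maximum principle argument; I also indicate a second proof following the Hsiung--Minkowski route advertised in the abstract.

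First, integrating the identity of Remark~\ref{support function} over the closed hypersurface ${\Sigma}$ and applying Stokes' theorem gives
\[
 0 = \int_{\Sigma} {\triangle}_{g}{\vert X\vert}^{2}\, dA = 2\left(n - \frac{1}{C}\right) \mathrm{vol}({\Sigma}),
\]
so $1/C = n$, pinning the homothety constant to $C = 1/n$. Hence ${\triangle}_{g}{\vert X\vert}^{2} \equiv 0$, so ${\vert X\vert}^{2}$ is a harmonic function on the closed Riemannian manifold ${\Sigma}$. The standard maximum principle (or integration by parts against ${\vert X\vert}^{2}$) forces ${\vert X\vert} \equiv r$ for a positive constant $r$, so ${\Sigma}$ is contained in the round hypersphere of radius $r$ centered at the origin.

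The inclusion of ${\Sigma}$ into this hypersphere is a smooth map between $n$-manifolds of equal dimension and thus a local diffeomorphism; compactness of ${\Sigma}$ then makes its image both open and closed in the connected hypersphere, so it is the entire hypersphere. The hypothesis $n \geq 2$ makes the hypersphere simply connected, which handles any immersed (non-embedded) case by ruling out nontrivial covers.

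The main obstacle is really just recognising that Remark~\ref{support function} already packages the geometric content; once $1/C = n$ is extracted, the rest is an application of the closed-manifold maximum principle plus elementary topology. Alternatively, following the abstract more literally, one can argue via the classical second Hsiung--Minkowski identity $\int_{\Sigma}[(n-1)\sigma_{1} - 2\sigma_{2}\IP{X}{\mathbf{N}}]\, dA = 0$: choosing the orientation so that $\sigma_{1} := -H > 0$, the soliton relation $\sigma_{1}\IP{X}{\mathbf{N}} = n$ converts this into $\int_{\Sigma}\sigma_{1}^{-1}\bigl[(n-1)\sigma_{1}^{2} - 2n\sigma_{2}\bigr]\, dA = 0$, and Maclaurin's inequality $(n-1)\sigma_{1}^{2} - 2n\sigma_{2} = \tfrac{1}{2}\sum_{i\neq j}(k_{i}-k_{j})^{2} \geq 0$ forces ${\Sigma}$ to be totally umbilic, hence a round hypersphere, with $\IP{X}{\mathbf{N}} \equiv r$ pinning its center at the origin.
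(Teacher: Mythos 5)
Your proposal is correct, and your primary argument takes a genuinely shorter route than the paper's. The paper also begins by pinning down $C=\frac1n$, but does so via the first Hsiung--Minkowski formula $\int_\Sigma \big(1+\frac1n\IP{X}{\overrightarrow{\, H \,}}\big)\,d\Sigma=0$ --- which, as you implicitly note, is just the integrated form of ${\triangle}_{g}|X|^2=2(n-\frac1C)$ --- and then invokes the \emph{second} Hsiung--Minkowski formula together with the pointwise inequality $\frac{H^2}{n^2}-\sigma_2=\frac{1}{n^2(n-1)}\sum_{i<j}(\kappa_i-\kappa_j)^2\ge 0$ to conclude that $\Sigma$ is totally umbilic, hence a round hypersphere, with the centering at the origin extracted only at the very end from $-H\IP{X}{\mathbf N}=n$. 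Your observation that once $C=\frac1n$ the function $|X|^2$ is harmonic, hence constant, on the closed manifold bypasses the umbilicity step entirely and gives containment in a sphere \emph{centered at the origin} in one stroke; your covering-space argument (the inclusion is a local diffeomorphism onto $\mathbb{S}^n(r)$, which is simply connected for $n\ge 2$) correctly upgrades containment to equality even in the immersed case. What the paper's route buys is that it locates the rigidity in the vanishing of the trace-free second fundamental form, the template that extends to other Minkowski-type rigidity statements; what your route buys is brevity and automatic centering. Your ``alternative'' second argument is, up to the normalization of the symmetric functions and the sign convention for $\mathbf N$, exactly the paper's proof, so only the first argument constitutes a genuinely different approach.
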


\begin{proof} Since $\Sigma$ is a compact hypersurface with nonvanishing mean curvature vector, there exists an inward pointing unit normal vector field ${\mathbf{N}}$ along $\Sigma$. Then $\overrightarrow{\, H \,}= {\triangle}_{g} X =H\mathbf{N}$, where the scalar mean curvature $H=-  div_{{}_{\Sigma}} \,  {\mathbf{N}}$ is positive. Since $\Sigma$ is a homothetic soliton, we have 
\begin{equation}   \label{rigidity proof 02}
  \frac{1}{C} = - \IP{X} {\overrightarrow{\, H \,} }=  - H \IP{X}{ \mathbf{N} }, 
\end{equation}
for some constant $C \neq 0$. The Hsiung--Minkowski formula \cite{Hsiung56} gives  
\[
   0 = \int_{\Sigma} \left(1 + \frac{1}{n} \IP{X}{\overrightarrow{\, H \,} } \right) d\Sigma
     = \left( 1 - \frac{1}{nC} \right)  \int_{\Sigma} 1 \; d\Sigma.
\]
It follows that $C=\frac{1}{n}$.  Let 
${\kappa}_{1}$, $\cdots$, ${\kappa}_{n}$ be principal curvature functions on $\Sigma$. 
In terms of
\[
{\sigma}_{2} = \frac{2}{n(n-1)} \sum_{1 \leq i < j \leq n} {\kappa}_{i} {\kappa}_{j} =  \frac{H^2}{n^2}  - \frac{1}{n^{2}(n-1)} \sum_{1 \leq i < j \leq n}  {\left( {\kappa}_{i} - {\kappa}_{j} \right)}^{2},
\]
we have the classical symmetric means inequality
\[
 \frac{H^2}{n^2} -    {\sigma}_{2} = \frac{1}{n^{2}(n-1)} \sum_{1 \leq i < j \leq n}  
{\left( {\kappa}_{i} - {\kappa}_{j} \right)}^{2} \geq 0.
\]
Applying the Hsiung--Minkowski formula \cite{Hsiung56} again, we obtain the integral identity
\[
   0 = \int_{\Sigma} \left( \frac{H}{n} + \frac{  {\sigma}_{2}  }{H} \IP{X}{\overrightarrow{\, H \,} } \right) d\Sigma
     = \int_{\Sigma} \left( \frac{H}{n} -  \frac{ n {\sigma}_{2}  }{H}  \right) d\Sigma
= \int_{\Sigma}  \frac{n}{H} \left(  \frac{H^2}{n^2} -    {\sigma}_{2}     \right) d\Sigma.
\]
Hence, $  \frac{H^2}{n^2} -    {\sigma}_{2} $ vanishes on $\Sigma$, which implies that 
${\kappa}_{1}=\cdots={\kappa}_{n}$ on $\Sigma$. Since ${\Sigma}^{n \geq 2}$ is a closed umbilic hypersurface in Euclidean space, it is a hypersphere. It follows from equation (\ref{rigidity proof 02}) that hypersphere is centered at the origin.
\end{proof}

\begin{lemma} \label{spherical expanders}
A minimal submanifold of the hypersphere ${\mathbb{S}}^{N\geq2} \subset {\mathbb{R}}^{N+1 \geq 3}$ is an expander for the inverse mean curvature flow. 
\end{lemma}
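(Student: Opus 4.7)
The plan is to compute the mean curvature vector $\vec{H}$ of $\Sigma$ as a submanifold of $\mathbb{R}^{N+1}$ directly and observe that it is proportional to the position vector $X$, yielding the soliton equation with a positive constant.

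First I would exploit the Gauss formula for the tower $\Sigma^n \subset \mathbb{S}^N \subset \mathbb{R}^{N+1}$. The second fundamental form of the unit sphere $\mathbb{S}^N \subset \mathbb{R}^{N+1}$ is elementary to compute: using the outward unit normal $x$ at a point $x \in \mathbb{S}^N$, one has $II^{\mathbb{S}^N}(U,V) = -\langle U,V\rangle\, x$ for any vectors $U,V$ tangent to $\mathbb{S}^N$. Decomposing the ambient covariant derivative on $\Sigma$ through both levels then gives
\[
\vec{H}_{\Sigma/\mathbb{R}^{N+1}} \;=\; \vec{H}_{\Sigma/\mathbb{S}^N} \;-\; n\,X,
\]
where $X$ denotes the position vector in $\mathbb{R}^{N+1}$ restricted to $\Sigma$, obtained by tracing the above relation over an orthonormal frame of $T\Sigma$.

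Next I would use the minimality hypothesis: since $\Sigma$ is minimal in $\mathbb{S}^N$, the term $\vec{H}_{\Sigma/\mathbb{S}^N}$ vanishes, so $\vec{H} = -n X$ on $\Sigma$. Because $\Sigma \subset \mathbb{S}^N$, the position vector $X$ is a unit vector that is orthogonal to $T_x\mathbb{S}^N$, and hence to the subspace $T_x\Sigma$; therefore $X^\perp = X$ along $\Sigma$ and $|\vec{H}|^2 = n^2$. In particular $\vec{H}$ is nowhere vanishing, so Definition~\ref{SOL} applies.

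Finally I would substitute into the soliton equation (\ref{SOLequation}):
\[
-\frac{1}{|\vec{H}|^2}\vec{H} \;=\; -\frac{1}{n^2}(-nX) \;=\; \frac{1}{n}\,X \;=\; \frac{1}{n}\,X^\perp,
\]
which identifies $\Sigma$ as a homothetic soliton with constant $C = 1/n > 0$, i.e., an expander. There is no real obstacle here beyond keeping the sign conventions for the mean curvature vector of nested submanifolds straight; the argument is a direct application of the iterated Gauss formula combined with the special fact that the position vector on $\mathbb{S}^N$ is itself a unit normal to the hypersphere.
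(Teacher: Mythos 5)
Your argument is correct and is essentially the paper's own proof: both establish $\vec{H}=-nX$ (the paper quotes $\triangle_g X + nX = 0$ as the standard consequence of minimality in $\mathbb{S}^N$, which is exactly the traced Gauss formula you write out), note that $X^{\perp}=X$ since the position vector is normal to the hypersphere, compute $|\vec{H}|=n$, and read off the soliton equation with $C=1/n>0$. The only difference is that you derive the identity $\vec{H}=-nX$ from first principles rather than citing it, which is a matter of exposition, not of substance.
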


\begin{proof} 
Let $\Sigma^{n \geq 1}$ be a minimal submanifold of the hypersphere ${\mathbb{S}}^{N\geq2} \subset {\mathbb{R}}^{N+1 \geq 3}$, and let $X$ denote the position vector field.
On the one hand, since $X$ is already normal to the hypersphere ${\mathbb{S}}^{N} \subset {\mathbb{R}}^{N+1}$, we observe the equality
    $ {X}^{\perp} :=   {X}^{\perp \left( \Sigma \subset   {\mathbb{R}}^{N+1} \right)} = X$. 
    On the other hand, according to the minimality of  ${\Sigma}^{n}$ in ${\mathbb{S}}^{N}$, we obtain 
\begin{equation}  \label{mcv in sphere}
     {\triangle}_{g} X +  n X = 0,
\end{equation}
where $g$ denotes the induced metric on  ${\Sigma}^{n}$. Thus, we have 
\begin{equation}  \label{mcv in Euclidean space}
   \overrightarrow{\, H \,}  :={  \overrightarrow{\, H \,} }_{  \Sigma \subset   {\mathbb{R}}^{N+1}  }     (X)  =  {\triangle}_{g} X  =  - n X
\quad \text{and} \quad
 \vert     \overrightarrow{\, H \,}  \vert   = n \vert X \vert = n.
\end{equation}
Combining the four equalities on $\Sigma$ and taking $C=\frac{1}{n}>0$, we meet 
$ -  \frac{1}{{\vert \overrightarrow{\, H \,} \vert}^{2} \; } \overrightarrow{\, H \,}  = C  {X}^{\perp}$,
which indicates that ${\Sigma}^{n}$ is an expander for the inverse mean curvature flow. 
\end{proof}

\begin{theorem}
For any integer $g \geq 1$, there exists at least one two-dimensional compact embedded expander of genus $g$ in ${\mathbb{R}}^{4}$.
\end{theorem}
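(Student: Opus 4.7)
The plan is to reduce the theorem entirely to Lemma \ref{spherical expanders} plus Lawson's classical construction \cite{Lawson1970}. Lawson showed that for every pair of positive integers $(m,k)$ there is a closed embedded minimal surface $\xi_{m,k} \subset \mathbb{S}^3$ of genus $mk$. In particular, taking $(m,k) = (g,1)$ yields, for each $g \geq 1$, a compact embedded minimal surface of genus $g$ sitting inside the unit $3$-sphere.

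The next step is simply to regard $\xi_{g,1}$ as a submanifold of $\mathbb{R}^4$ via the standard isometric embedding $\mathbb{S}^3 \hookrightarrow \mathbb{R}^4$. This preserves compactness, embeddedness and genus. Then I would invoke Lemma \ref{spherical expanders} with $N = 3$ and $n = 2$: every minimal submanifold of $\mathbb{S}^3$ is an expander for the inverse mean curvature flow in $\mathbb{R}^4$ with expansion constant $C = 1/n = 1/2$. Consequently $\xi_{g,1}$ is a two-dimensional compact embedded expander of genus $g$ in $\mathbb{R}^4$, as required.

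One minor point worth checking explicitly is that $\xi_{g,1}$, viewed in $\mathbb{R}^4$, has nowhere vanishing mean curvature vector, so that it actually satisfies the definition of a homothetic soliton in Definition \ref{SOL}. This is immediate from equation (\ref{mcv in Euclidean space}) in the proof of Lemma \ref{spherical expanders}, which gives $\vert \overrightarrow{\, H \,} \vert = n = 2$ identically. There is therefore no real obstacle in the argument: all of the substantive work has been outsourced to Lawson's construction, which is taken as a black box, and to the already-established Lemma \ref{spherical expanders}. The entire proof amounts to assembling these two ingredients in one sentence.
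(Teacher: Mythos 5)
Your proof is correct and is essentially identical to the paper's: both cite Lawson's construction of a compact embedded minimal surface of genus $g$ in $\mathbb{S}^3$ and then apply Lemma~\ref{spherical expanders} to conclude it is an expander in $\mathbb{R}^4$. The extra details you supply (the specific surfaces $\xi_{g,1}$ and the verification that $|\overrightarrow{\, H \,}| = 2$ is nonvanishing) are accurate but not a different argument.
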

\begin{proof}
For any integer $g$, Lawson \cite{Lawson1970} showed that there exists a compact embedded minimal surface  ${\Sigma}$ of genus $g$  in ${\mathbb{S}}^{3}$. Lemma \ref{spherical expanders} shows that  ${\Sigma}$ becomes an expander to the inverse mean curvature flow in  ${\mathbb{R}}^{4}$.
\end{proof}

%%%%%%%%%% 
%   NEW SECTION
%%%%%%%%%%

\section{Expanders with rotational symmetry}
\label{SSec4}

In this section, we investigate homothetic solitons in ${\mathbb{R}}^{n+1 \geq 3}$ with rotational symmetry about a line through the origin. Given a profile curve $\mathcal{C}$ parameterized by $\left( r(t), h(t) \right)$, $t \in I$ in the half-plane $\left\{  \; (r, h) \; \vert  \;   r>0, h \in \mathbb{R} \; \right\}$, we associate the induced rotational hypersurface in $ {\mathbb{R}}^{ n +1 }$ defined by
\[
   {\Sigma}^{n} =  \left\{ \;  X =  \bigg( r(t) \, {\mathbf{p}}, 
 h(t)   \bigg) 
 \in {\mathbb{R}}^{  n +1 } \, \, \bigg|    \; \left( r(t), h(t) \right) \in \mathcal{C},  \, {\mathbf{p}} \in   {\mathbb{S}}^{n-1} 
\subset {\mathbb{R}}^{n} \right\}.
\]
The rotational hypersurface $ {\Sigma}$ satisfies the homothetic soliton equation~(\ref{SOLhyper2}) if and only if the profile curve $\left( r(t), h(t) \right)$ satisfies the ODE
\begin{equation}
\label{ROTprofile}
-\left(  \frac{  \dot{r} \ddot{h}  -  \dot{h} \ddot{r}  }{ {\left(    {\dot{r} }^{2}   +   {\dot{h} }^{2}  \right)}^{\frac{3}{2}  }    }  +  \frac{    n-1    }  {  {\left(    {\dot{r} }^{2}   +   {\dot{h} }^{2}  \right)}^{\frac{1}{2}  }    }  \cdot \frac{  \dot{h}   }{  r  }  \right) \;
\frac{  - \dot{h} r + \dot{r}  h } {  {\left(    {\dot{r} }^{2}   +   {\dot{h} }^{2}  \right)}^{\frac{1}{2}  }     } = \frac{1}{C}
\end{equation}
for some constant $C>0$. We observe: 

\begin{enumerate}[i.]

\item
As long as the quantity $r \dot{h} - h\dot{r}$ is non-zero, we may write equation~(\ref{ROTprofile}) as
\[
\frac{  \dot{r} \ddot{h}  -  \dot{h} \ddot{r}  }{ {\dot{r} }^{2} + {\dot{h} }^{2} } = - \frac{ (n-1)  }{  r  }  \dot{h}  + \frac{{\dot{r} }^{2}   +   {\dot{h} }^{2}}{ C (r\dot{h} - h \dot{r}) } .
\]

\item
 The ODE (\ref{ROTprofile}) is invariant under the dilation $(r, h) \mapsto (\lambda r, \lambda h)$, 
unlike the profile curve equation for shrinkers or expanders for the mean curvature flow.  
\item Spheres are expanders. The half circle $\left( r(t), h(t) \right)=( R \cos t, R \sin t ), t \in \left( -\frac{\pi}{2}, \frac{\pi}{2} \right)$ having the origin as its center obeys the ODE (\ref{ROTprofile}). Indeed, we compute
 \[
  \frac{  \dot{r} \ddot{h}  -  \dot{h} \ddot{r}  }{ {\left(    {\dot{r} }^{2}   +   {\dot{h} }^{2}  \right)}^{\frac{3}{2}  }    }  
= \frac{1}{R}, \; \;
 \frac{    n-1    }  {  {\left(    {\dot{r} }^{2}   +   {\dot{h} }^{2}  \right)}^{\frac{1}{2}  }    }  \cdot \frac{  \dot{h}   }{  r  }    = \frac{n-1}{R}, \; \;
\frac{  - \dot{h} r + \dot{r}  h } {  {\left(    {\dot{r} }^{2}   +   {\dot{h} }^{2}  \right)}^{\frac{1}{2}  }     } = -R
\]
implies
\[
 - \left(  \frac{  \dot{r} \ddot{h}  -  \dot{h} \ddot{r}  }{ {\left(    {\dot{r} }^{2}   +   {\dot{h} }^{2}  \right)}^{\frac{3}{2}  }    }  +  \frac{    n-1    }  {  {\left(    {\dot{r} }^{2}   +   {\dot{h} }^{2}  \right)}^{\frac{1}{2}  }    }  \cdot \frac{  \dot{h}   }{  r  }  \right) \;
\frac{  - \dot{h} r + \dot{r}  h } {  {\left(    {\dot{r} }^{2}   +   {\dot{h} }^{2}  \right)}^{\frac{1}{2}  }     } = n.
\]

\item The lines $r(t)=\text{constant}$ are solutions to the ODE (\ref{ROTprofile}) when $C=1/(n-1)$. Cylinders become expanders.

\item We outline a way to deduce the ODE~(\ref{ROTprofile}) using the homothetic soliton equation $${\triangle}_{g}  {\vert X \vert}^{2}    =  2 \left( n - \frac{1}{C} \right).$$ We observe that $\Sigma$ is a homothetic soliton with rotational symmetry if and only if 
\begin{equation}  \label{div eqn}
2 \left(   n - \frac{1}{C}  \right) = {\triangle}_{g} \left(   r^{2}+h^{2}    \right) = \frac{1}{ r^{n-1}   {\left(    {\dot{r} }^{2}   +   {\dot{h} }^{2}  \right)}^{\frac{1}{2}  }   } \;  \frac{d}{dt} \left(  \; \frac{  r^{n-1}  }{  {\left(    {\dot{r} }^{2}   +   {\dot{h} }^{2}  \right)}^{\frac{1}{2}  }     }    \frac{d}{dt} \left( 
 r^{2}+h^{2}  \right) \;  \right),
\end{equation}
which is equivalent to (\ref{ROTprofile}).
\end{enumerate}

\subsection{Construction of expanding infinite bottles}
\label{construct_bottles}

Writing the profile curve $\mathcal{C}$ as a graph  $(r(h),h)$, we have the following second order non-linear differential equation:
\begin{equation}
\label{rot:eq1}
\frac{r''}{1+r'^2} = \frac{n-1}{r} - \frac{1+r'^2}{C(r - hr')}.
\end{equation}
When $C=\frac{1}{n-1}$, this equation becomes: 
\begin{equation}
\label{rot:eq1C}
\frac{r''}{1+r'^2} = (n-1) \left[ \frac{1}{r} - \frac{1+r'^2}{r - hr'} \right].
\end{equation}
Observe that $r(h) = constant$ is a solution to~(\ref{rot:eq1C}), which corresponds to a round hypercylinder expander. Moreover, if $r(h)$ is a solution to~(\ref{rot:eq1C}) with $r'(a) = 0$ for some $a\in \mathbb{R}$, then $r(h) \equiv r(a)$. Consequently, any nonconstant solution to~(\ref{rot:eq1C}) must be strictly monotone.

In this section, we construct new examples of entire solutions to (\ref{rot:eq1C}), which correspond to hypercylinder expanders that interpolate between two concentric round hypercylinders:

\begin{theorem}[\textbf{Construction of infinite bottles}]
\label{chimney_thm}
Let $r_0$, $h_0$, and $r_0'$ be constants satisfying: $r_{0}>0$, $h_{0}<0$, and $r_{0}' \in (0, -h_{0}/r_{0})$, and let $r(h)$ be the unique solution to (\ref{rot:eq1C}) satisfying the initial conditions: $r(h_{0})=r_{0}$ and $r'(h_{0})=r_{0}'$. Then $r(h)$ is an entire solution, and there are constants $0 < r_{bot} < r_{top} < \infty$ so that $r(h)$ interpolates between $r_{bot}$ and $r_{top}$. More precisely, $r(h)$ is strictly increasing, $\lim_{h \to -\infty}r(h) = r_{bot}$, $\lim_{h \to \infty}r(h) = r_{top}$, and there exists a point $h_1 \in (h_0, 0)$ so that $r''(h_1)=0$ and $r''(h)$ has the same sign as $(h_1-h)$ when $h \neq h_1$.
\end{theorem}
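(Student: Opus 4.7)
The plan revolves around the energy $E(h):=r(h)^2+h^2$, whose derivative is $E'(h)=2(rr'+h)$. A short manipulation rewrites (\ref{rot:eq1C}) in the equivalent form
\[
r''(h) \;=\; -\,\frac{(n-1)\bigl(1+r'(h)^2\bigr)\,r'(h)}{r(h)\bigl[r(h)-hr'(h)\bigr]}\cdot\frac{E'(h)}{2},
\]
so whenever $r$, $r'$, and $r-hr'$ are all positive the signs of $r''$ and $E'$ are opposite. Moreover $E''=2(1+r'^2+rr'')$ reduces to $2(1+r'^2)>0$ at any zero of $E'$, so every critical point of $E$ is a strict local minimum, and in particular $E'$ has at most one zero.

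First I would run through the setup: standard ODE theory provides a local $C^2$-solution, the hypothesis $r_0'\in(0,-h_0/r_0)$ is equivalent to $E'(h_0)<0$, and hence $r''(h_0)>0$. The observation from item~(iv) preceding Theorem~\ref{chimney_thm} (any solution with $r'(a)=0$ is constant), combined with uniqueness, forces $r'>0$ throughout the maximal interval of existence. To locate the inflection I would use that for every $h\geq 0$ in the interval of existence one has $E'(h)\geq 2rr'>0$, hence $r''(h)<0$; together with $r''(h_0)>0$ and the intermediate value theorem this yields some $h_1\in(h_0,0)$ with $r''(h_1)=0$. Its uniqueness and the sign pattern $\operatorname{sign} r''=\operatorname{sign}(h_1-h)$ follow at once from the fact that $E$ admits at most one critical point.

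The next task is to extend the solution to all of $\mathbb{R}$. On $[0,\infty)$ the sign $r''<0$ makes $r'$ decreasing and bounded, while concavity gives the uniform bound $r(h)-hr'(h)\geq r(0)>0$, so (\ref{rot:eq1C}) remains uniformly regular and the solution persists for all positive $h$. On $(-\infty,h_1]$, $r''>0$ makes $r'$ increasing in $h$, so $r'\leq r'(h_1)$; the denominator $r-hr'\geq r>0$ automatically since $h\leq 0$; and the only conceivable obstruction to extension on the left is $r(h)\to 0$ at some finite $h_L$. I would rule this out by observing that the $1/r$-blow-up in (\ref{rot:eq1C}) that such a collapse would create forces $r''$ to be non-integrable near $h_L$, contradicting the boundedness of $r'$ inherited from $r'\leq r'(h_1)$.

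Finally, strict monotonicity $r'>0$ produces limits $r_{bot}:=\lim_{h\to-\infty}r(h)\in[0,r_0]$ and $r_{top}:=\lim_{h\to\infty}r(h)\in[r_0,\infty]$, while the same constants-only observation forces $r'(h)\to 0$ as $h\to\pm\infty$. Substituting a linear asymptote ansatz $r\sim c|h|$ as $h\to\infty$ (respectively a power-decay ansatz $r\sim(-h)^{-\alpha}$ with $\alpha>0$ as $h\to-\infty$) into (\ref{rot:eq1C}) produces a mismatch between the leading orders of the two sides, ruling out $r_{top}=\infty$ and $r_{bot}=0$ respectively; strict monotonicity of $r$ then gives the strict inequality $r_{bot}<r_{top}$. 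I expect the real work to be concentrated in these last two steps: the finite-$h_L$ exclusion and the asymptotic orderings as $h\to\pm\infty$ both rest on carefully tracking which terms of (\ref{rot:eq1C}) dominate in the degenerate regime $r'\to 0$, and that bookkeeping is the main obstacle I anticipate.
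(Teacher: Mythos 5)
Your energy function $E(h)=r(h)^2+h^2$ is a clean repackaging of the paper's own computation: the factor $-h-rr'$ in the paper's rewriting of (\ref{rot:eq1C}) is exactly $-E'(h)/2$, and your observation that every critical point of $E$ is a strict local minimum recovers the uniqueness of the inflection point slightly more elegantly than the paper's route (which differentiates the ODE and checks $r'''<0$ at any zero of $r''$). But there are two gaps. The first is fixable: your intermediate value argument locating $h_1\in(h_0,0)$ evaluates $r''$ at $h=0$, which presupposes that the maximal interval of existence reaches $0$. The paper devotes a separate argument to excluding $h_{max}\le 0$: if $r''>0$ on all of $(h_0,h_{max})$ with $h_{max}\le 0$, then $E'<0$ gives $0<r'<-h/r<-h_0/r_0$ while $r-hr'\ge r_0-h_0r_0'$, so the right-hand side of (\ref{rot:eq1C}) stays bounded and the solution extends past $h_{max}$, a contradiction. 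You have all the ingredients for this but never run it, and your later extension steps on $[0,\infty)$ and $(-\infty,h_1]$ quietly assume $0$ and $h_1$ are already in the domain.

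The serious gap is the final step. Ruling out $r_{top}=\infty$ and $r_{bot}=0$ by substituting the ans\"atze $r\sim c|h|$ and $r\sim(-h)^{-\alpha}$ is not a proof: since $r''<0$ and $r'$ decreases to $0$ on the right, unboundedness of $r$ would occur through sublinear growth (say $r\sim h^{1/2}$ or $\log h$), which no linear ansatz detects, and likewise $r\to 0$ as $h\to-\infty$ need not follow any power law. (Your claim that $r'\to 0$ at $\pm\infty$ "follows from the constants-only observation" is also unjustified; at $-\infty$ it follows from positivity of $r$, and at $+\infty$ it is part of what must be proved.) This is exactly where the paper's Part 2 does its real work, by inverting the graph to $h(r)$: for the outer barrier (Lemma~\ref{outside}) it shows $\tan\theta/r^{n-1}$ is increasing, converts this into the height estimate $h\ge h_1+\frac{\tan\theta_1}{n r_1^{n-1}}(r^n-r_1^n)$, and feeds that back into the angle equation to force $\theta$ to exceed $\pi/2$ at a finite $r_{top}$; for the inner barrier (Lemma~\ref{inside}) it shows $\frac{\theta}{(n-1)\epsilon_1}+\ln r-\frac{r}{\epsilon_2}$ is decreasing, so $\ln r$ is bounded below. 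Estimates of this kind --- genuine monotone quantities rather than leading-order matching --- are what is needed to close the argument, and as you yourself acknowledge, this is where the real work lies; it has not been done. (By contrast, your sketch for excluding a finite-$h$ collapse $r\to 0$ is essentially sound: since $-h-rr'$ stays bounded away from zero there, one gets $r''\ge c\,r'/r$, whose integral diverges while $r'$ stays bounded, which is the paper's $\arctan r'-\epsilon_2\ln r$ argument in disguise.)
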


\begin{proof}[Proof of Theorem~\ref{chimney_thm}]

We separate the proof into two parts. First, we show that the solution is entire and increasing, and there is a unique point where the concavity changes sign. Second, we establish estimates that bound the solution between two positive constants. We note that the rotation of the profile curve about the $h$-axis has the appearance of an
infinite bottle, which interpolates between two concentric cylinders.

\subsubsection*{{\bf Part 1:} Existence of expanding infinite bottles}

Let $r_0$, $h_0$, and $r_0'$ be constants satisfying: $r_{0}>0$, $h_{0}<0$, and $r_{0}' \in (0, -h_{0}/r_{0})$, and let $r(h)$ be the unique solution to (\ref{rot:eq1C}) satisfying the initial conditions: $r(h_{0})=r_{0}$ and $r'(h_{0})=r_{0}'$.

Notice that the condition $r'(h_0) = r_0' >0$ shows that $r$ is a nonconstant solution and guarantees that $r'(h)>0$. Also, observe that the assumption $r_{0}' \in (0, -h_0/r_0)$ coupled with the defining initial conditions for $r(h)$ shows that $h+r'r$ is negative at $h=h_0$. In fact, by assumption, the terms $r'$, $-h-r'r$, $r$, and $r-hr'$ are all positive at $h=h_0$. So, writing equation (\ref{rot:eq1C}) as
\begin{equation}
\label{rot:eq1C re}
  r'' = (n-1) (1+r'^2 )  \frac{r'(-h-r'r) }{  r(r-hr') },
\end{equation}
we see that $r''(h_{0})>0$.

In the following lemma, we show that the concavity of $r(h)$ changes sign exactly once when $r(h)$ is a maximally extended solution.
\begin{lemma}[\textbf{Existence of a unique inflection point}]
\label{inflection existence}
Let $r_0$, $h_0$, and $r_0'$ be constants satisfying: $r_{0}>0$, $h_{0}<0$, and $r_{0}' \in (0, -h_{0}/r_{0})$, and let $r(h)$ be the solution to (\ref{rot:eq1C}) satisfying the initial conditions: $r(h_{0})=r_{0}$ and $r'(h_{0})=r_{0}'$. If $r(h)$ is a maximally extended solution, then there exists a point $h_1 \in (h_0,0)$ so that $r''(h_1)=0$. Furthermore, $r''(h)$ has the same sign as $(h_1 - h)$ when $h \neq h_1$.
\end{lemma}
 
\begin{proof}
Let $r(h)$ be a maximally extended solution to (\ref{rot:eq1C}) satisfying the above assumptions. Then  there are constants $h_{min}$ and $h_{max}$ satisfying $-\infty \leq h_{min} < h_0 < h_{max} \leq \infty$ so that $r(h)$ is defined for all $h \in (h_{min}, h_{max})$. It follows from the preceding paragraph that $r''(h_{0})>0$.

\textbf{Step A.} We claim that there exists a point $h_1 \in (h_0, 0)$ so that $r''(h_1) = 0$. We first treat the case where $h_{max} \leq 0$. In this case, proving the claim is equivalent to showing there is a point $h_1 \in (h_0, h_{max})$ so that $r''(h_1) = 0$. Suppose to the contrary that 
\[
r''(h) > 0 \quad \text{for all} \;\; h \in (h_{0}, h_{max}). 
\]
As $h_{max} \leq 0$ and both $r$ and $r'$ are positive, we have $(r-hr')>0$ for $h \in (h_{0}, h_{max})$. In fact, since $\frac{d}{dh}(r-hr') = -hr''>0$, we see that $(r-hr')>r_0-h_0r_0'$. Using equation (\ref{rot:eq1C re}) and the positivity of the functions $r$, $r'$, $(r-hr')$, and $r''$, we arrive at the inequality $(-h-rr') > 0$, which leads to the estimate
\[
0 < r'(h) < -\frac{h}{r} < -\frac{h_0}{r_0} \quad \text{for all} \;\; h \in (h_{0}, h_{max}). 
\]
Now, returning to equation (\ref{rot:eq1C re}), we have the estimate
\[
0 \leq r''(h) = (n-1) (1+r'^2 )  \frac{r'(-h-r'r) }{  r(r-hr') } \leq (n-1)\left( 1+ \left( \tfrac{h_0}{r_0}\right)^2 \right) \frac{(-\frac{h_0}{r_0}) (-h_0)}{r_0(r_0-h_0r'_0) },
\]
for $h \in (h_{0}, h_{max})$. These estimates contradict the finiteness of the maximal endpoint $h_{max}$, and we conclude that the claim is true in the case where $h_{max} \leq 0$.

It still remains to prove the claim in the case where $h_{max} >0$. However, in this case the solution $r(h)$ is defined when $h=0$ and equation (\ref{rot:eq1C}) implies  
 \[
 r''(0)= - (n-1) \frac{{r'(0)}^2}{r(0)} <0. 
 \]
It follows that there exists a point $h_1 \in (h_0, 0)$ so that $r''(h_1)=0$.

\textbf{Step B.} We claim that $r''(h)$ has the same sign as $h_1-h$. Taking a derivative of equation~(\ref{rot:eq1}), we have
\[
\frac{r'''}{1+r'^2} = \frac{2r'(r'')^2}{(1+r'^2)^2} -\frac{n-1}{r^2}r' - \frac{2r'r''}{C(r - hr')} - \frac{1+r'^2}{C(r - hr')^2}hr''.
\]
At the point $h_1$, we obtain    
\[
\frac{r'''(h_1)}{1+r'(h_1)^2} = - (n-1) \frac{r'(h_1)}{{r(h_1)}^2}<0,
\]
which shows that $r''(h)$ has the same sign as $h_1-h$ in a neighborhood of $h_1$. In fact, at any point $\bar{h}$ where $r''(\bar{h})=0$, we have $r'''(\bar{h})<0$. This property tells us that the sign of $r''$ can only change from positive to negative, and consequently $r''$ vanishes at most once. Thus, $r''(h)$ has the same sign as $h_1-h$ for all $h \in (h_{min}, h_{max})$.
\end{proof}
 
Next, we prove that the profile curves corresponding to the infinite bottles come from entire graphs.

\begin{lemma}[\textbf{Existence of entire solutions}]
Let $r_0$, $h_0$, and $r_0'$ be constants satisfying: $r_{0}>0$, $h_{0}<0$, and $r_{0}' \in (0, -h_{0}/r_{0})$, and let $r:(h_{min},h_{max}) \to \mathbb{R}_+$ be the maximally defined solution to (\ref{rot:eq1C}) satisfying the initial conditions: $r(h_{0})=r_{0}$ and $r'(h_{0})=r_{0}'$. Then $h_{max}=\infty$ and $h_{min} = -\infty$.
\end{lemma}

\begin{proof}
Let $r(h)$ be a maximally extended solution to (\ref{rot:eq1C}) satisfying the above assumptions. In the previous lemma we proved the existence of a point $h_1 \in (h_0,0)$ so that $r''(h_1)=0$ and $r''(h)$ has the same sign as $(h_1-h)$ when $h \neq h_1$.

\textbf{Step A.}  We claim that $h_{max}=\infty$. First, we show that $h_{max}>0$. To see this, notice that $0 \leq r'(h) \leq r'(h_1)$, $r(h) \geq r_0$, and $r-hr' \geq r_0$ whenever $h_1 \leq h \leq 0$. It follows from equation (\ref{rot:eq1C}) that the solution $r(h)$ can be extended past $h \leq 0$. Thus, $h_{max}>0$. Next, we show that $h_{max} = \infty$. Since $h_1<0$, we have $\frac{d}{dh}(r-hr') = -hr'' \geq 0$ when $h \geq 0$ so that $(r-hr') \geq r(0)$ when $h \geq 0$. We also have $0 \leq r'(h) \leq r'(h_1)$ and $r(h) \geq r_0$ when $h \geq 0$. As before, it follows from equation (\ref{rot:eq1C}) that the solution $r(h)$ can be extended past any finite point.

\textbf{Step B.} We claim that $h_{min} = -\infty$. Suppose to the contrary that $h_{min} > -\infty$. Then at least one of the functions $r'$, $\frac{1}{r}$, $\frac{1}{r-hr'}$ must  blow-up at the finite point $h=h_{min}$. Since $r''>0$ on $(h_{min}, h_1)$, the positive function $r'$ is increasing, and we have  $  r'(h)  \leq  r'(h_0) =r_0'$ for all $h \in (h_{min}, h_0)$. So, the function $r'$ does not blow-up at $h_{min}$. If the function $1/r$ is bounded above on $(h_{min}, h_0)$, then the inequality $0<r(h)<r(h)-h\,r'(h)$ (when $h \leq 0$) guarantees that $1/(r-hr')$ is also bounded above on $(h_{min}, h_0)$, in which case, the solution can be extended prior to $h_{min}$. Therefore, the function  $\frac{1}{r}$ must blow-up at $h=h_{min}$. In other words, we have 
 \[
 \lim_{h \to {h_{min}}^{+}} r(h)=0.
 \] 
Observing this and using $0<r'(h)<r_0'$ on $(h_{min}, h_0)$, we can find a sufficiently small $\delta>0$ so that $r'(h)r(h) \geq \frac{-h_{0}}{2}$ for all 
$h \in (h_{min}, h_{min}+\delta]$. Also, the inequality $\frac{d}{dh}(r-hr')=-hr''>0$ guarantees that $0<r(h)-h r'(h) \leq \epsilon_1 :=r( h_{min}+\delta)-( h_{min}+\delta) r'( h_{min}+\delta)$. 
It follows from these estimates and equation (\ref{rot:eq1C}) that 
 \[
  \frac{d}{dh} \left( \arctan r' \right)= \frac{r''}{1+r'^2} =(n-1) \frac{-(h+r'r)}{r-hr'} \cdot \frac{r'}{r} \geq \epsilon_{2}   \frac{d}{dh} \left( \ln r \right),
\]
where $\epsilon_2 =\frac{(n-1) \frac{-h_{0}}{2}}{\epsilon_1}>0$ is a constant. Hence, the function $F(h):=\arctan \left( \frac{dr}{dh} \right) -\epsilon_2 \ln r(h)$ is increasing on $(h_{min}, h_{min}+\delta]$. Thus, we have the estimate 
\[
 \epsilon_2 \ln r(h)  \geq -  F\left( h_{min}+\delta \right) + \arctan r' > -  F\left( h_{min}+\delta \right). 
 \]
Taking the limit as $h \to h_{min}^+$ and using $\lim_{h \to {h_{min}}^{+}} r(h)=0$ leads to a contradiction. We conclude that $h_{min}=-\infty$. 
\end{proof}

So far, we have proved the existence of an entire bottle solution $r(h)$ to (\ref{rot:eq1C}). In the next part of the proof we will establish estimates that squeeze the ends of the infinite bottles between two cylinders.
 
\subsubsection*{{\bf Part 2:} Squeezing infinite bottles by two hypercylinders}

To establish upper and lower bounds for the solution $r(h)$, we study the profile curve $\mathcal{C}$ by writing it as a graph over the axis of rotation: $(r,h(r))$. Then, we have the following second order non-linear differential equation
\begin{equation}
\label{rot:h:eq1}
\frac{h''}{1+h'^2} = -\frac{(n-1)}{r} h' + \frac{1+h'^2}{C(rh' -h)},
\end{equation}
or equivalently,
\begin{eqnarray}
\label{rot:h:eq1.1}
\frac{h''}{1+h'^2}= \frac{(n-1)h h' + \frac{1}{C}r}{r(rh'-h)} + \left( \frac{1}{C} - (n-1) \right) \frac{h'^2}{(rh' -h)}         \nonumber
\end{eqnarray}
Throughout this section, we take $C=\frac{1}{n-1}$ so that equation~(\ref{rot:h:eq1}) takes the form
\begin{equation}
\label{rot:h:eq:c}
\frac{h''}{1+h'^2} = -(n-1) \left[ \frac{h'}{r} - \frac{1+h'^2}{rh' - h } \right].
\end{equation}

\begin{lemma}[\textbf{Existence of the outside cylinder barrier}]
\label{outside}
Let $h(r)$ be a maximally extended solution to~(\ref{rot:h:eq:c}) defined on $(r_{bot},r_{top})$. Assume there is a point $r_1 \in (r_{bot},r_{top})$ so that $h'(r) > 0$ and $h''(r)>0$ for all $r \in (r_1, r_{top})$. Also, assume that $r_1 h'(r_1) - h(r_1) > 0$. Then, we have  
\[
r_{top} < \infty, \quad \lim_{r \to {r_{top}}^{-} } h'(r) = \infty, \quad \text{and} \lim_{r \to {r_{top}}^{-}} h(r) = \infty.
\]
\end{lemma}
\begin{proof} 
We introduce the angle functions $\theta, \phi : (r_1, r_{top}) \rightarrow \left(0, \frac{\pi}{2}\right]$, defined by 
\[
\theta(r)=\arctan \left( \frac{dh}{dr} \right) \quad \text{and} \quad \phi(r)=\arctan \left( \frac{h}{r} \right),
\]
to rewrite the profile curve equation (\ref{rot:h:eq:c}) as 
\begin{equation} \label{angles eqn}
\frac{d\theta}{dr} =  \frac{n-1}{r \cdot \tan\left(\theta -\phi\right)}.
\end{equation}
Combining this and $0<\tan\left(\theta -\phi\right) \leq \tan \theta$, we have $\frac{d\theta}{dr} \geq   \frac{n-1}{r \cdot \tan \theta}$, which implies 
\[
\frac{d}{dr} \left( \frac{\tan \theta}{r^{n-1}} \right) \geq \frac{n-1}{r^{n} \tan \theta} \geq 0.
\]
This tells us that the continuous function $\frac{\tan \theta}{r^{n-1}}$ is increasing for $r > r_{1}$. According to the estimate  
\[
\frac{d}{dr} \left(  h -  \frac{  \tan \theta_1 }{n {r_{1}}^{n-1} } r^n  \right) = \tan \theta -  \frac{ \tan \theta_{1} }{ {r_{1}}^{n-1} }  r^{n-1} = \left( \frac{ \tan \theta }{ {r}^{n-1} } - \frac{ \tan \theta_{1} }{ {r_{1}}^{n-1} } \right) r^{n-1} \geq 0,
\]
we see that the function $(h -  \frac{  \tan \theta_{1} }{n {r_{1}}^{n-1} } r^n)$ is increasing. In particular, we have the height estimate
\[
h \geq h_{1} +   \frac{  \tan \theta_{1} }{n {r_{1}}^{n-1} } \left(  r^n - {r_{1}}^{n} \right).
\]
Observe that 
$ \frac{1}{\tan\left(\theta -\phi\right)} = \frac{1 + \tan \theta \tan \phi}{\tan \theta - \tan \phi} \geq \tan \phi$. Combining this  with equation (\ref{angles eqn}) we have 
\[
    \frac{1}{n-1} \frac{d\theta}{dr}  \geq \frac{\tan \phi }{r} = \frac{h}{r^2} \geq \frac{1}{r^2} \left( h_{1} + 
     \frac{  \tan \theta_{1} }{n {r_{1}}^{n-1} } \left(  r^n - {r_{1}}^{n} \right) \right),
\]
which implies
\[ 
  \frac{d}{dr} \left[ \frac{\theta}{n-1}   +    \left(       h_{1} -  \frac{\tan \theta_{1}}{n} r_1   \right)  \frac{1}{r}     - \frac{\tan \theta_{1}}{ n (n-1) \, {r_{1}}^{n-1}  }  r^{n-1}   \right] \geq 0.
\] 
Therefore, the function $F(r)=\frac{\theta}{n-1}   +    \left(       h_{1} -  \frac{\tan \theta_{1}}{n} r_1   \right)  \frac{1}{r}     - \frac{\tan \theta_{1}}{ n (n-1) \, {r_{1}}^{n-1}  }  r^{n-1}$ is increasing, and for all $r \in (r_1, r_{top})$, we have
\[
 \frac{\theta}{n-1}   \geq   F \left(r_{1} \right) -  \left(       h_{1} -  \frac{\tan \theta_{1}}{n} r_1   \right)  \frac{1}{r}     + \frac{\tan \theta_{1}}{ n (n-1) \, {r_{1}}^{n-1}  }  r^{n-1}.
\] 
Since the left hand side is bounded above, and the right hand side becomes arbitrarily large as $r$ goes to $\infty$, we conclude that $r_{top}<\infty$. It then follows that the increasing, concave up function $h(r)$ satisfies $\lim_{r \to {r_{top}}^-} h'(r) = \infty$. If $h(r)$ has a finite limit as $r$ approaches $r_{top}$, then by the uniqueness of cylnder $r(h) \equiv r_{top}$, we get a contradiction. Therefore, we also have $\lim_{r \to {r_{top}}^{-} } h(r) = \infty$.
\end{proof}

Next, we prove the following lemma, which shows that a solution with $h<0$, $h'>0$ and $h''<0$ cannot approach the axis of rotation.

\begin{lemma}[\textbf{Existence of the inside cylinder barrier}]
\label{inside}
Let $h(r)$ be a maximally extended solution to~(\ref{rot:h:eq:c}) defined on $(r_{bot},r_{top})$. Assume there is a point $r_0 \in (r_{bot}, r_{top})$ so that $h(r)<0$, $h'(r) > 0$ and $h''(r) < 0$ for all $r \in (r_{bot}, r_{0}]$. Then, we have  
\[
r_{bot} >0, \quad \lim_{r \to {r_{bot}}^{+} } h'(r) = \infty, \quad \text{and} \lim_{r \to {r_{bot}}^{+}} h(r) =- \infty.
\]
\end{lemma}

\begin{proof} 
We first observe that  $h -rh'<0$ and $h h'<0$.  We introduce three well-defined functions $\theta : (r_{bot}, r_{0}] \rightarrow \left(0, \frac{\pi}{2}\right]$ and ${\Psi}_{1}, {\Psi}_{2}: (r_{bot}, r_{0}]  \rightarrow \mathbb{R}$ as folllows: 
\[
\theta(r)=\arctan \left( \frac{dh}{dr} \right), \quad {\Psi}_{1}(r) = \frac{ -h h' }{ rh'-h }  , \quad \text{and} \quad {\Psi}_{2}(r)= \frac{r+h h'}{h h'},
\]
and we rewrite the profile curve equation (\ref{rot:h:eq:c}) as 
\begin{equation} \label{angles eqn2}
\frac{d\theta}{dr} =  -\frac{n-1}{r} {\Psi}_{1} {\Psi}_{2}.
\end{equation}
Using the estimate $\frac{d{\Psi}_{1}}{dr} = \frac{  -r (h')^{3} + h (  (h')^2+ h h''  ) }{(h - rh')^2} \leq 0$, 
we see that ${\Psi}_{1}$ is decreasing on $(r_{bot}, r_{0}]$, and setting $\epsilon_{1}= {\Psi}_{1}(r_{0})$, we have 
\begin{equation} \label{F estimate}
 {\Psi}_{1}(r) \geq \epsilon_{1}>0.
\end{equation}

Observing $(h h')' =  {h'}^{2}+h''h >0$ and defining a positive constant $\epsilon_{2}=-h(r_0)h'(r_0)$, we have 
the estimate $hh' \leq -\epsilon_{2}$ for all $r \in (r_{bot}, r_{0}]$. It follows that 
\begin{equation} \label{G estimate}
{\Psi}_{2}(r)= 1+ \frac{r}{h h'} \geq 1 - \frac{r}{{\epsilon}_{2}}.
\end{equation}
Combining  (\ref{angles eqn2}), (\ref{F estimate}), and (\ref{G estimate}), we have
\[
\frac{d}{dr} \left(  \frac{\theta}{(n-1) \epsilon_1} + \ln r - \frac{r}{\epsilon_2} \right) \leq 0.
\]
Therefore, the function $\Psi(r)=\frac{\theta}{(n-1) \epsilon_1} + \ln r - \frac{r}{\epsilon_2}$ is decreasing, and for all $r \in (r_{bot}, r_{0}]$, we have
\[
   \frac{\theta}{(n-1) \epsilon_1}  \geq -\ln r + \frac{r}{\epsilon_2} + \Psi(r_{0}).
\]
Since the left hand side is bounded above, and the right hand side becomes arbitrarily large as $r$ goes to $0$, we conclude that $r_{bot}>0$. It then follows that the increasing, concave down function $h(r)$ satisfies $\lim_{r \to {r_{bot}}^+} h'(r) = \infty$. If $h(r)$ has a finite limit as $r$ approaches $r_{bot}$, then by comparison with the cylnder $r(h) \equiv r_{bot}$, we get a contradiction. Therefore, we also have $\lim_{r \to {r_{bot}}^{+} } h(r) = -\infty$.
\end{proof}

In conclusion, for constants $r_0$, $h_0$, and $r_0'$ satisfying: $r_{0}>0$, $h_{0}<0$, and $r_{0}' \in (0, -h_{0}/r_{0})$, there exists an entire solution $r(h)$ to (\ref{rot:eq1C}) satisfying the initial conditions: $r(h_{0})=r_{0}$ and $r'(h_{0})=r_{0}'$. Moreover, there are constants $0 < r_{bot} < r_{top} < \infty$ so that $r(h)$ interpolates between $r_{bot}$ and $r_{top}$ in the sense that $r(h)$ is strictly increasing, $\lim_{h \to -\infty}r(h) = r_{bot}$, $\lim_{h \to \infty}r(h) = r_{top}$, and there exists a point $h_1 \in (h_0, 0)$ so that $r''(h_1)=0$ and $r''(h)$ has the same sign as $(h_1-h)$ when $h \neq h_1$.

\end{proof}

\subsection{Other examples of complete solitons}

In~\cite{HI1997}, Huisken and Ilmanen used a phase-plane analysis to exhibit complete, rotationally symmetric expanders for the inverse mean curvature flow, which are topological hyperplanes. For each $C > 1/n$, they showed there exists a half-entire solution to (\ref{rot:eq1}), which intersects the $h$-axis perpendicularly, and they provided numeric descriptions of these profile curves. For $C > 1/n$ and $C \neq 1/(n-1)$, they also indicated the existence of entire solutions to (\ref{rot:eq1}), which are symmetric about the $r$-axis and correspond to topological hypercylinders. (We note that the rotational expander constructed in Theorem~\ref{chimney_thm} is non-symmetric in the sense that its profile curve is not symmetric about the $r$-axis.) In this section, we explain how the techniques from Section~\ref{construct_bottles} can be used to recover the examples and numeric pictures preseneted in~\cite{HI1997}.

\subsubsection*{Hyperplane expanders}

We begin by considering the initial value problem where we shoot perpendicularly to the axis of rotation. For $C>0$, let $h(r)$ be a solution to (\ref{rot:h:eq1}) with $h(0)=h_0<0$ and $h'(0)=0$. This singular shooting problem is well-defined (see~\cite{BG1976} and~\cite{D2015}), and the solution satisfies $h''(0) = -1 / (n C h_0) > 0$. Differentiating (\ref{rot:h:eq1}) and analyzing the equation for $h'''(r)$ shows that, under the above conditions, we have $h''(r)>0$ and $h'(r)>0$, for $r>0$, as long as the solution is defined. The global behavior of the solution ultimately depends on the value of $C$.

When $h(r)$ is a solution to the above shooting problem, the graph $(r,h(r))$ is part of a profile curve $\mathcal{C}$, which corresponds to a rotational expander for the inverse mean curvature flow. Applying the techniques from the proof of Theorem~\ref{chimney_thm} to the profile curve $\mathcal{C}$ leads to a description of the global behavior of this expander, which ultimately depends on the value of $C > 1/n$. In terms of the profile curve $\mathcal{C}$ written as a graph over the $h$-axis, we have the following result.

\begin{theorem}
\label{shooting_thm}
For $C > 1/n$ and $h_0<0$, there exists a half-entire solution $r(h)$ to (\ref{rot:eq1}) that is defined for $h>h_0$, and such that the curve $(h,r(h))$ intersects the $h$-axis perpendicularly when $h=h_0$. The solution $r(h)$ has three types of behavior, depending on the value of $C$:
\begin{enumerate}
\item If $C=1/(n-1)$, then $r'>0$, $r''<0$, and there exists $0<r_{top} < \infty$ so that $\lim_{h \to \infty} r(h) = r_{top}$. 

\item If $C > 1/(n-1)$, then $r'>0$, $r''<0$, and $\lim_{h \to \infty} r(h) = \infty$. 

\item If $1/n < C < 1 / (n-1)$, then there exists a point $h_1$ so that $r''(h)$ has the same sign as $(h-h_1)$, and $\lim_{h \to \infty} r(h) = 0$.
\end{enumerate}
\end{theorem}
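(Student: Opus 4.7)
The plan is to study the shooting problem for $h(r)$ with $h(0)=h_0<0$ and $h'(0)=0$, and then translate the resulting geometry back to the graph $r(h)$. Local existence for this singular shooting problem is classical \cite{BG1976,D2015}; evaluating (\ref{rot:h:eq1}) at $r=0$ gives $h''(0)=\frac{1}{nC(-h_0)}>0$, so both $h'$ and $h''$ are positive on a right-neighborhood of $r=0$.

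The first step is to propagate this sign information globally in $r$. I would show that $h''(r)>0$ (and hence $h'(r)>0$) on the entire maximal interval of existence by differentiating (\ref{rot:h:eq1}) and evaluating at a putative first zero $r_*$ of $h''$: the terms involving $h''$ drop out, leaving $\tfrac{h'''(r_*)}{1+h'(r_*)^2}=\tfrac{(n-1)\,h'(r_*)}{r_*^{2}}>0$, which contradicts $h''$ decreasing through zero. Hence the graph $h(r)$ is strictly increasing and convex wherever it exists, and the only way the geometry can change is the vertical-tangent scenario $h'(r)\to\infty$ at some finite $r_{\max}$ or global existence on $(0,\infty)$; the three cases are distinguished by which alternative occurs.

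The cases then follow from barrier arguments modelled on Section~\ref{construct_bottles}, separated by the sign of $\tfrac{1}{C}-(n-1)$. For Case~1 ($C=\tfrac{1}{n-1}$), the monotonicity $(rh'-h)'=rh''>0$ together with the initial value $-h_0>0$ gives $rh'(r)-h(r)>0$ throughout, verifying the hypothesis of Lemma~\ref{outside}; the lemma yields $r_{top}<\infty$ and $h\to\infty$, which in $r(h)$-coordinates becomes an entire increasing graph with $r''=-h''/(h')^3<0$ and $r(h)\to r_{top}$. For Case~2 ($C>\tfrac{1}{n-1}$) the barrier from Lemma~\ref{outside} reverses, so instead I would derive a tangent-angle estimate on $\theta(r)=\arctan h'(r)\in(0,\tfrac{\pi}{2})$ that prevents $h'$ from blowing up at finite $r$, giving a globally defined convex $h(r)$ and hence an entire, increasing, concave-down $r(h)$ with $r(h)\to\infty$. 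For Case~3 ($\tfrac{1}{n}<C<\tfrac{1}{n-1}$) the same tangent-angle argument picks up the opposite sign and forces $r_{\max}<\infty$ with $h'(r)\to\infty$ at $r_{\max}$; past the corresponding turning point the profile continues as a strictly decreasing graph $r(h)$, and an adaptation of Lemma~\ref{inside} (with the roles of the increasing and decreasing branches interchanged) drives $r(h)\to 0$ as $h\to\infty$, with the inflection point $h_1$ of the statement appearing after the turning point.

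The main obstacle is Case~3, where the shooting branch exits the $h$-graph regime at finite $r$ and must be matched to a continuation describable only as an $r(h)$-graph. Verifying the sign compatibility of $h$, $hh'$, $rh'-h$, and $(hh')'$ at the start of the continuation branch, so that the hypotheses of an appropriately adapted version of Lemma~\ref{inside} can be invoked, is the most delicate piece of the argument. Throughout all three cases the sign of $\tfrac{1}{C}-(n-1)$ must be tracked carefully, since this coefficient determines whether the inequalities in the barrier arguments point inward or outward, and therefore which of the three asymptotic pictures is realised.
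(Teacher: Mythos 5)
Your overall strategy coincides with the paper's: shoot perpendicularly from the axis for $h(r)$, compute $h''(0)=-1/(nCh_0)>0$, propagate $h''>0$ and $h'>0$ by evaluating the equation for $h'''$ at a putative first zero of $h''$, and then split into cases according to the sign of $\frac{1}{C}-(n-1)$. Your Case 1 (monotonicity of $rh'-h$ feeding the hypotheses of Lemma~\ref{outside}, then converting to the $r(h)$-graph via $r''=-h''/(h')^3$) and Case 2 (ruling out finite-$r$ blow-up of $h'$ using $C>1/(n-1)$; the paper does this by extracting $h\geq\epsilon\,rh'$ from (\ref{rot:h:eq1}) near a putative blow-up point and integrating) are essentially the paper's arguments. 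The problems are concentrated in Case 3, and they go beyond the "delicate sign bookkeeping" you flag.

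First, you pass from ``$h'\to\infty$ at a finite $r_{\max}$'' directly to ``past the corresponding turning point the profile continues as a decreasing graph $r(h)$.'' But $h'\to\infty$ at finite $r$ also occurs in Case 1, where $h\to\infty$ simultaneously and the curve is asymptotic to a cylinder with no turning point at all. What distinguishes Case 3 is that $h$ remains \emph{bounded} as $r\to r_{\max}^-$, so the vertical tangent occurs at finite height and $r(h)$ attains an interior maximum; the paper obtains this upper bound on $h$ by integrating (\ref{rot:h:eq1}) using the positivity of $\frac{1}{C}-(n-1)$, and then uses (\ref{ROTprofile}) in polar coordinates to continue the decreasing branch for all $h$. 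Without this step the existence of the turning point, the inflection point $h_1$, and the limit $r\to 0$ are all unsupported. Second, your plan to reach $r(h)\to 0$ via ``an adaptation of Lemma~\ref{inside}'' is backwards: that lemma is an inside barrier whose conclusion is $r_{bot}>0$, i.e., it \emph{prevents} the profile from approaching the axis, and the paper explicitly remarks that its analogue is false when $1/n<C<1/(n-1)$ --- that failure is exactly what permits $r\to 0$. The correct mechanism is an integration argument in the style of your Case 2, showing $r$ cannot be bounded below by a positive constant, not a sign-flipped version of the inside-barrier lemma.
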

\begin{proof}
When $C=1/(n-1)$, the convexity of $h(r)$, along with the analysis from Lemma~\ref{outside} shows that there is a point $r_{top} < \infty$ so that $\lim_{r \to {r_{top}}^-} h'(r) =\infty$ and $\lim_{r \to {r_{top}}^-} h(r) =\infty$. Written as a graph over the $h$-axis, this shows that there is a solution $r(h)$ to (\ref{rot:eq1}), defined for $h>h_0$, which intersects the $h$-axis perpendicularly at $h_0$ and satisfies $r'>0$, $r''<0$, and $\lim_{h \to \infty} r(h) = r_{top}$. 

Next, when $C > 1 / (n-1)$, we claim that the solution $h(r)$ must exist for all $r>0$. To see this, suppose to the contrary that $h'$ increases to $\infty$ at a point $r_{top} < \infty$. Then, since $C>1/(n-1)$, equation (\ref{rot:h:eq1}) forces $h \geq \epsilon r h'$ when $r$ is close to $r_{top}$, for some $\epsilon>0$. However, integrating this inequality shows that $h'$ does not blow-up at a finite point; hence the solution exist for all $r>0$. Therefore, the solution $h(r)$ exists for all $r>0$, and using $h''>0$ and $h'>0$, we have $\lim_{r \to \infty} h(r) = \infty$. Written as a graph over the $h$-axis, this shows that there is a solution $r(h)$ to (\ref{rot:eq1}), defined for $h>h_0$, which intersects the $h$-axis perpendicularly at $h_0$ and satisfies $r'>0$, $r''<0$, and $\lim_{h \to \infty} r(h) = \infty$.

Finally, when $1/n < C < 1 / (n-1)$, the term $\left( \frac{1}{C} - (n-1) \right)$ is positive and the analysis in Lemma~\ref{outside} can be used to show that $h(r)$ does not exist for all $r>0$. Moreover, using the positivity of $\left( \frac{1}{C} - (n-1) \right)$ and integrating equation (\ref{rot:h:eq1}), we arrive at an inequality that provides an upper bound for $h$. In terms of the profile curve written as a graph over the $h$-axis, this says that the solution $r(h)$ achieves a global maximum at a finite point. 
Reading equation (\ref{ROTprofile}) in polar coordinates, we can show that $r(h)$ is defined for $h>h_0$. This forces the concavity of $r(h)$ to change sign at a finite point, and as in the proof of Lemma~\ref{inflection existence}, it follows that there is a point $h_1$ so that $r''(h)$ has the same sign as $(h-h_1)$. Then, an argument similar to the one in the previous paragraph shows that $r(h)$ is not bounded below by a positive constant, and we conclude that $\lim_{h \to \infty} r(h) = 0$.
\end{proof}

We remark that when $1/n < C < 1 / (n-1)$, the analogue of Lemma~\ref{outside} holds, but as we saw in the proof of the previous theorem, the analogue of Lemma~\ref{inside} is not true. Similarly, if $C>1/(n-1)$, then the analogue of Lemma~\ref{inside} holds, but the analogue of Lemma~\ref{outside} does not.

\subsubsection*{Hypercylinder expanders}

We finish this section with the following result on the construction of rotational expanders that are topological hypercylinders.

\begin{theorem}
\label{symmetric_thm}
For $C > 1/n$ and $r_0>0$, there is a unique solution $r(h)$ to (\ref{rot:eq1}) that is symmetric about the $r$-axis and satisfies the initial condition: $r(0)=r_0$, $r'(0)=0$. The solution $r(h)$ has three types of behavior, depending on the value of $C$:
\begin{enumerate}
\item If $C=1/(n-1)$, then $r(h) \equiv r_0$ (which corresponds to the round hypercylinder). 

\item If $C > 1/(n-1)$, then $r(h)$ has a global minimum at $h=0$, and there exists a point $h_1>0$ so that $r''(h)$ has the same sign as $(h_1 - |h|)$. Also, $\lim_{h \to \infty} r(h) = \infty$. 

\item If $1/n < C < 1 / (n-1)$, then $r(h)$ has a global maximum at $h=0$, and there exists a point $h_1>0$ so that $r''(h)$ has the same sign as $(|h| - h_1)$. Also, $\lim_{h \to \infty} r(h) = 0$. 
\end{enumerate}
\end{theorem}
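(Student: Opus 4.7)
My plan is to deduce each case by combining the symmetry of (\ref{rot:eq1}) with the sign of $r''(0)$ and with the barrier arguments from Section~\ref{construct_bottles}. Since $r-hr'=r_0\neq 0$ at the initial point, the right-hand side of (\ref{rot:eq1}) is smooth there, so standard ODE theory gives a unique maximal solution $r(h)$ with $r(0)=r_0$ and $r'(0)=0$. Equation (\ref{rot:eq1}) is invariant under $h\mapsto -h$: the substitution $\tilde r(h):=r(-h)$ preserves $r''$ and $r'^2$ while sending $h\tilde r'(h)$ to the value of $u\,r'(u)$ at $u=-h$, so $\tilde r$ satisfies the same ODE with the same initial data, and uniqueness forces $r(-h)=r(h)$.

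Setting $h=0$ and $r'=0$ in (\ref{rot:eq1}) yields
\[
r''(0)=\frac{1}{r_0}\left(n-1-\frac{1}{C}\right),
\]
which is zero, positive, or negative precisely according to whether $C=1/(n-1)$, $C>1/(n-1)$, or $1/n<C<1/(n-1)$. Case (1) is then immediate: $r\equiv r_0$ solves the Cauchy problem and uniqueness does the rest. For Cases (2) and (3) I would restrict to $h\geq 0$ by symmetry and apply the $C$-independent identity from the proof of Lemma~\ref{inflection existence}: at any $\bar h>0$ with $r''(\bar h)=0$,
\[
\frac{r'''(\bar h)}{1+r'(\bar h)^2}=-\frac{(n-1)\,r'(\bar h)}{r(\bar h)^2},
\]
so every such zero is a simple sign change whose direction is dictated by the sign of $r'$. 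Combined with $r''(0)>0$ in Case (2) (respectively $r''(0)<0$ in Case (3)) this produces the unique inflection point $h_1>0$ where $r''$ switches from $+$ to $-$ (respectively from $-$ to $+$).

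For the global behavior I would re-parameterise the profile as $(r,h(r))$ and run the barrier arguments from Section~\ref{construct_bottles} on (\ref{rot:h:eq1}), now keeping track of the extra term $\bigl(\tfrac{1}{C}-(n-1)\bigr)\tfrac{h'^2}{rh'-h}$ that was absent in Theorem~\ref{chimney_thm}. In Case (2) this term is negative, and the argument used in the proof of Theorem~\ref{shooting_thm}---namely that blow-up of $h'$ at a finite $r_{top}$ would force $h\geq \epsilon\,rh'$ near $r_{top}$, contradicting blow-up---extends the solution to all $r>0$ and yields $\lim_{h\to\infty}r(h)=\infty$. In Case (3) the extra term is positive, activating the outside-cylinder barrier of Lemma~\ref{outside} to bound $r$ from above by a decreasing curve; the corresponding failure of Lemma~\ref{inside} in this regime then forces $\lim_{h\to\infty}r(h)=0$. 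The main obstacle will be rigorously adapting Lemmas~\ref{outside} and~\ref{inside}, both proved under $C=1/(n-1)$, to absorb the new term with the correct sign in each regime, in the spirit of the monotone quantities $F$ and $\Psi$ used there.
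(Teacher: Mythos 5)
Your strategy is the paper's: read off the sign of $r''(0)=\tfrac{1}{r_0}\bigl(n-1-\tfrac{1}{C}\bigr)$ from (\ref{rot:eq1}), use the $C$-independent identity $\tfrac{r'''(\bar h)}{1+r'(\bar h)^2}=-(n-1)\tfrac{r'(\bar h)}{r(\bar h)^2}$ at zeros of $r''$ to control the concavity, and pass to the graph $(r,h(r))$ and the barrier analysis of Section~\ref{construct_bottles} for the asymptotics. Your reflection argument for existence, uniqueness and evenness of the symmetric solution is more explicit than anything the paper writes down, and it is fine.

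The genuine gap is the \emph{existence} of the inflection point $h_1$. The third-derivative identity shows only that $r''$ vanishes at most once on $\{h>0\}$ and fixes the direction of any sign change; combined with the sign of $r''(0)$ it does not ``produce'' $h_1$, because nothing you write rules out $r''$ keeping a fixed sign for all $h>0$. The paper supplies a separate step here: for $C>1/(n-1)$ one shows a positive solution $h(r)$ of (\ref{rot:h:eq1}) cannot satisfy $h'>0$, $h''<0$ for all $r>0$ (equivalently $r''>0$ cannot persist; note that $r''>0$ together with $0<r-hr'<r$ in (\ref{rot:eq1}) forces $r'^2<C(n-1)-1$, so $r\to\infty$ while $r-hr'\le r_0$ keeps the negative term bounded away from zero and $\tfrac{n-1}{r}\to 0$, a contradiction), and for $1/n<C<1/(n-1)$ it invokes the integration argument from Theorem~\ref{shooting_thm}(3). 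You need such a step in both Cases (2) and (3). Relatedly, in Case (3) the ``failure of Lemma~\ref{inside}'' is not an argument that $r(h)\to 0$: you must run the positive version of the $h\geq\epsilon\,rh'$ integration from Theorem~\ref{shooting_thm} (with the inequality driven by the sign of $\tfrac{1}{C}-(n-1)$) to show that $r$ bounded below by a positive constant would keep $h(r)$ finite as $r\to r_{bot}^+$ and contradict $|h|\to\infty$. Invoking Lemma~\ref{outside} there is also unnecessary, since $r\le r_0$ is immediate from the global maximum at $h=0$. These points are fixable and you have located the right estimates, but as written the existence of $h_1$ and the limit $r\to 0$ are asserted rather than derived.
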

\begin{proof}
It follows from equation (\ref{rot:eq1}) that the condition $r'(0)=0$ forces the solution to be constant when $C=1/(n-1)$, to have a global minimum at $h=0$ when $C > 1/(n-1)$, and to have a global maximum at $h=0$ when $1/n < C < 1 / (n-1)$. To see that there is a finite point $h_1>0$ where the concavity of $r(h)$ changes sign when $C > 1/ (n-1)$, we first observe that $r(h)$ is increasing when $h>0$, and consequently, it is defined for all $h>0$. An analysis of equation (\ref{rot:h:eq1}) shows that a positive solution $h(r)$ cannot satisfy $h''(r)<0$ and $h'(r)>0$ for all $r>0$ when $C > 1/(n-1)$; hence, there is a finite point $h_1>0$ where the concavity of $r(h)$ changes sign. When $1/n < C < 1 / (n-1)$, the analysis in the proof of Theorem~\ref{shooting_thm} can be used to show that the concavity of $r(h)$ changes sign at a finite point $h_1>0$. The proofs of the remaining properties are similar to the proofs given in Theorem~\ref{chimney_thm} and Theorem~\ref{shooting_thm}.
\end{proof}

\end{document}